\pdfoutput=1
\RequirePackage{ifpdf}
\ifpdf % We~are running pdfTeX in pdf mode
\documentclass[pdftex]{sigma}
\else
\documentclass{sigma}
\fi

\usepackage{euscript}
\usepackage{amscd}
\usepackage{yhmath}
\usepackage{mathrsfs}
\usepackage{bbding}

\numberwithin{equation}{section}
\usepackage[all]{xy}

\newtheorem{thm}{Theorem}[section]
\newtheorem{cor}[thm]{Corollary}
\newtheorem{lem}[thm]{Lemma}

\newtheorem{prop}[thm]{Proposition}

\newtheorem{clm}[thm]{Claim}

\theoremstyle{definition}
\newtheorem{defn}[thm]{Definition}
\newtheorem{rem}[thm]{Remark}

 \newcommand{\ba}{\begin{eqnarray}}
  \newcommand{\na}{\end{eqnarray}}

\def\E{\ifmmode{\mathbb E}\else{$\mathbb E$}\fi} %
\def\N{\ifmmode{\mathbb N}\else{$\mathbb N$}\fi} %natural numbers%
\def\R{\ifmmode{\mathbb R}\else{$\mathbb R$}\fi} %real numbers
\def\Q{\ifmmode{\mathbb Q}\else{$\mathbb Q$}\fi} %rational numbers
\def\C{\ifmmode{\mathbb C}\else{$\mathbb C$}\fi} %complex numbers
\def\H{\ifmmode{\mathbb H}\else{$\mathbb H$}\fi} %complex numbers
\def\Z{\ifmmode{\mathbb Z}\else{$\mathbb Z$}\fi} %integers
\def\P{\ifmmode{\mathbb P}\else{$\mathbb P$}\fi} %
\def\T{\ifmmode{\mathbb T}\else{$\mathbb T$}\fi} %
\def\SS{\ifmmode{\mathbb S}\else{$\mathbb S$}\fi} %
\def\DD{\ifmmode{\mathbb D}\else{$\mathbb D$}\fi} %
\def\K{\ifmmode{\mathbb K}\else{$\mathbb K$}\fi}
\def\F{\ifmmode{\mathbb F}\else{$\mathbb F$}\fi} %finite field

\newcommand{\arc}[1]{%
 \settowidth{\dimen0}{\ensuremath{#1}}%
 \divide\dimen0 by 2%
 \overset{\rotatebox{-90}{\ensuremath{\left(\rule{0pt}{\dimen0}\right.}}}{#1}%
}

\begin{document}

\allowdisplaybreaks

\renewcommand{\thefootnote}{}

\renewcommand{\PaperNumber}{030}

\FirstPageHeading

\ShortArticleName{Sign Convention for $A_{\infty}$-Operations in Bott--Morse Case}

\ArticleName{Sign Convention for $\boldsymbol{A_{\infty}}$-Operations\\ in Bott--Morse Case\footnote{This paper is a~contribution to the Special Issue on Integrability, Geometry, Moduli in honor of Motohico Mulase for his 65th birthday. The~full collection is available at \href{https://www.emis.de/journals/SIGMA/Mulase.html}{https://www.emis.de/journals/SIGMA/Mulase.html}}}

\Author{Kaoru ONO}

\AuthorNameForHeading{K.~Ono}

\Address{Research Institute for Mathematical Sciences, Kyoto University, Kyoto, 606-8502, Japan}
\Email{\href{mailto:email@address}{ono@kurims.kyoto-u.ac.jp}}

\ArticleDates{Received July 14, 2024, in final form March 03, 2025; Published online April 29, 2025}

\Abstract{We describe the sign and orientation issue appearing the filtered $A_{\infty}$-formulae in Lagrangian Floer theory using de Rham model in Bott--Morse setting. After giving the definition of filtered $A_{\infty}$-operations in a Fukaya category, we verify the filtered $A_{\infty}$-formulae.}

\Keywords{filtered $A_{\infty}$-operation; Kuranishi structure; bordered stable map}

\Classification{53D40; 53D37; 58A99}

\renewcommand{\thefootnote}{\arabic{footnote}}
\setcounter{footnote}{0}

\section{Introduction}
The aim of this note is to describe the sign and orientation issue appearing the filtered $A_{\infty}$-formulae in Lagrangian Floer theory
using de Rham model in Bott--Morse setting.
When we work with only one relatively spin Lagrangian submanifold,
we constructed the filtered $A_{\infty}$-algebra in~\cite{FOOO09I,FOOO09II} using the singular chain complex model.
The sign and orientation are explained in~\cite[Sections~8.3--8.5]{FOOO09II}.
In the de Rham model version, see~\cite[Section~22.4]{FOOO20} and also~\cite{ST}.
We gave a construction of the filtered $A_{\infty}$-bimodule using the singular chain model in~\mbox{\cite{FOOO09I,FOOO09II}}, especially,
the sign and orientation are described in~\cite[Section~8.8]{FOOO09II}.
Sign and orientation in Bott--Morse Hamiltonian Floer complex using the de Rham model version, see~\cite[Definition~19.3 and Proposition~19.5]{FOOO20}.
In this note, we discuss the sign and orientation issue appearing in the construction of the filtered $A_{\infty}$-category for a collection of finitely many
(relatively) spin Lagrangian submanifolds.
The construction of Kuranishi structures (a version of a tree-like K-system in the sense of~\cite{FOOO20}) on moduli spaces
of stable holomorphic polygons is discussed
in other papers~\cite{AFOOO,F}.
Here, we give a definition of $A_{\infty}$-operations in Bott--Morse case (see Definition~\ref{def}) using such Kuranishi structures.
We verify the sign convention by showing the~filtered $A_{\infty}$-relation (see Theorem~\ref{conclusion}).

\section{Preliminaries}
We use the convention on orientation on the fiber product (in the sense of Kuranishi structure) as in~\cite[Section 8.2]{FOOO09II}.
Let $p\colon M \to N$ be a fiber bundle with oriented relative tangent bundle.
Restrict the fiber bundle to an open subset $U$ of $N$, we may assume that $U$ is oriented.
Then we give an orientation on $p^{-1}(U) \subset M$ using the isomorphism
$TM = p^* TN \oplus T_{\rm fiber} M$, where $T_{\rm fiber}M$ is the relative tangent bundle.
Then our convention of the integration along fibers of~${p\colon M \to N}$ is
\begin{equation*} %\label{fiber int}
\int_U \alpha \wedge p_! \beta = \int_{p^{-1}(U)} p^* \alpha \wedge \beta, \nonumber
\end{equation*}
where $\alpha \in \Omega^*(U)$ and $\beta \in \Omega^*\bigl(p^{-1}(U)\bigr)$,
Reversing the orientation of $U$ induces reversing of the orientation of $p^{-1}(U)$, hence the push-forward $p_! \beta$ does not depend on
the choice of the orientation of $U$.
Therefore, for a proper submersion $p\colon M \to N$ with the oriented relative tangent bundle, the integration along fibers
\[
p_!\colon \ \Omega^k(M) \to \Omega^{k-\dim M + \dim N}(N)
\]
is well defined.

We have the following properties.

\begin{prop}\quad%\label{prop1}
\begin{enumerate}\itemsep=0pt
\item[$(1)$] $p_! ( ( p^* \theta ) \wedge \beta ) = \theta \wedge (p_! \beta),$ where $\theta \in \Omega^*(N)$ and
$\beta \in \Omega^*(M)$.

\item[$(2)$]
Let $p\colon M \to N$ and $q\colon N \to B$ be fiber bundles with oriented relative tangent bundles. For $\beta \in \Omega^*(M)$, we have
$(q \circ p)_! \beta = q_! \circ p_! (\beta)$.
\end{enumerate}
\end{prop}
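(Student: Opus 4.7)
The plan is to derive both identities from the defining relation $\int_U \alpha \wedge p_!\beta = \int_{p^{-1}(U)} p^*\alpha \wedge \beta$ combined with the standard nondegeneracy of the wedge pairing: if two forms in $\Omega^*(N)$ pair identically with every compactly supported test form $\alpha$ on every small coordinate chart $U \subset N$, then they coincide. Consequently, to prove an identity of the form $p_!(\cdots) = (\cdots)$ it suffices to check that both sides pair to the same integral against an arbitrary such $\alpha$.

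For (1), I would substitute $(p^*\theta)\wedge\beta$ into the defining relation and use $p^*\alpha \wedge p^*\theta = p^*(\alpha \wedge \theta)$ together with associativity of the wedge product. This gives
\begin{equation*}
\int_U \alpha \wedge p_!\bigl((p^*\theta)\wedge\beta\bigr)
= \int_{p^{-1}(U)} p^*(\alpha \wedge \theta) \wedge \beta
= \int_U (\alpha \wedge \theta) \wedge p_!\beta,
\end{equation*}
where the last equality is the defining relation applied to the test form $\alpha \wedge \theta$, and the right-hand side equals $\int_U \alpha \wedge (\theta \wedge p_!\beta)$ by associativity. Nondegeneracy then yields (1).

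For (2), I would pair both sides against a test form $\alpha \in \Omega^*(B)$ supported in a coordinate chart $W \subset B$. Using the defining relation once for $(q\circ p)_!$ and then twice iteratively for $q_!$ and $p_!$, together with $(q\circ p)^* = p^* \circ q^*$, I obtain
\begin{equation*}
\int_W \alpha \wedge (q\circ p)_!\beta
= \int_{(q\circ p)^{-1}(W)} (q\circ p)^*\alpha \wedge \beta
= \int_{q^{-1}(W)} q^*\alpha \wedge p_!\beta
= \int_W \alpha \wedge q_!(p_!\beta),
\end{equation*}
and nondegeneracy concludes.

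The essentially only technical point --- and the reason one must be careful --- is the compatibility of orientation conventions. For (2) one must verify that the orientation on the relative tangent bundle of $q\circ p$ induced by the short exact sequence $0 \to T_{\rm fiber}(p) \to T_{\rm fiber}(q\circ p) \to p^*T_{\rm fiber}(q) \to 0$ agrees with the orientation produced by iterating the splittings $TM = p^*TN \oplus T_{\rm fiber}M$ and $TN = q^*TB \oplus T_{\rm fiber}N$, so that the conventions used to orient $(q\circ p)^{-1}(W)$ directly and via $p^{-1}(q^{-1}(W))$ coincide. For (1), one likewise checks that the orientation of $p^{-1}(U)$ is unaffected by wedging the base form with $\theta$. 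Once this orientation bookkeeping is settled, the integral manipulations above involve no sign corrections; this check --- not the integration identities themselves --- is where the conventions of Section~2 are genuinely tested.
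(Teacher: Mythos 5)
The paper states these two properties without proof, so there is no proof of record to compare against; both are standard facts about fiberwise integration. Your derivation of the integral identities is correct and is exactly the argument one expects given that $p_!$ is defined by the pairing $\int_U \alpha \wedge p_!\beta = \int_{p^{-1}(U)}p^*\alpha\wedge\beta$: substituting $p^*\theta\wedge\beta$ for $\beta$ and peeling off the extra factor via $p^*\alpha\wedge p^*\theta = p^*(\alpha\wedge\theta)$ gives (1), and iterating the pairing gives (2). You are also right that the only genuine content is the compatibility of orientation conventions. One small caution on that point: with the paper's convention $TM = p^*TN\oplus T_{\rm fiber}M$, iterating over $q\circ p$ produces
\[
TM = (q\circ p)^*TB \oplus p^*T^q_{\rm fiber}N \oplus T^p_{\rm fiber}M,
\]
so the relative tangent bundle of $q\circ p$ must be oriented as $p^*T^q_{\rm fiber}N \oplus T^p_{\rm fiber}M$, i.e., ``quotient before sub.'' The short exact sequence $0\to T^p_{\rm fiber}M\to T^{q\circ p}_{\rm fiber}M\to p^*T^q_{\rm fiber}N\to 0$ that you write down, read with the usual sub-then-quotient convention, gives the opposite ordering and would introduce a sign $(-1)^{(\operatorname{reldim} p)(\operatorname{reldim} q)}$; the check you defer is precisely that the paper's base-first convention for push-forward forces the quotient-first orientation on the fiber of the composite, and with that convention no sign appears. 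With that remark, the proposal is correct.
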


Using them, we find the following.

\begin{cor}\label{iteratefib}
$(q \circ p)_! (p^* \theta \wedge \beta) = q_! (\theta \wedge p_! \beta)$.
\end{cor}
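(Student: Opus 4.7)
The plan is to deduce this directly from the two properties of the preceding Proposition, with essentially no additional work. First I would apply Property~(2), the compatibility of push-forward with composition, to the form $p^* \theta \wedge \beta \in \Omega^*(M)$, obtaining
\[
(q \circ p)_! ( p^* \theta \wedge \beta ) = q_! \bigl( p_! ( p^* \theta \wedge \beta ) \bigr).
\]
Then I would invoke Property~(1), the projection formula for $p$, applied to the form $\beta$ and the form $\theta \in \Omega^*(N)$, to rewrite the inner push-forward as
\[
p_! ( p^* \theta \wedge \beta ) = \theta \wedge p_! \beta.
\]
Substituting this back into the previous display yields $(q\circ p)_! ( p^*\theta \wedge \beta ) = q_! ( \theta \wedge p_! \beta )$, which is the desired identity.

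The only thing to check along the way is that both properties indeed apply in the required form: Property~(2) needs $p$ and $q$ to be fiber bundles with oriented relative tangent bundles, which is part of the hypothesis of the corollary; and Property~(1) applies to $\theta \in \Omega^*(N)$ and an arbitrary form on $M$, which in our case is $\beta$. There is no obstacle; the statement is essentially a packaging of the projection formula and functoriality of integration along fibers, and the corollary is included for later reference.
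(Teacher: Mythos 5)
Your derivation is correct and is exactly what the paper intends: the corollary is stated immediately after Proposition~2.1 with the remark ``Using them, we find the following,'' and the combination of Property~(2) (functoriality $(q\circ p)_! = q_! \circ p_!$) and Property~(1) (the projection formula for $p$) is the unique natural route. Nothing to add.
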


%We also have

\begin{prop}[base change]\label{base change}
Let $f\colon S \to N$ be a smooth map.
Denote by $\overline{p}\colon f^*M \to S$ the pullback of the fiber bundle $p\colon M \to N$ and $\widetilde{f}\colon f^*M \to M$ the bundle map
covering $f$. Then we~have
$f^* \circ p_! = \overline{p}_! \circ \widetilde{f}^*$.
\end{prop}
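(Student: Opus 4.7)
The plan is to reduce the identity to a local trivialization of $p$ and then verify it by a direct computation on decomposable forms. First I would observe that the statement is local on $S$, so it suffices to work in a neighborhood $V \subset S$ whose image $f(V)$ lies in an open $U \subset N$ over which $p$ trivializes as $M|_U \cong U \times F$, with $F$ an oriented fiber. Under this trivialization, $f^*M|_V$ is identified with $V \times F$, $\overline{p}$ becomes the projection onto the first factor, and $\widetilde{f}$ becomes $f \times \mathrm{id}_F$.

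Next, by linearity and a partition of unity on $F$, it suffices to check the identity on decomposable forms $\beta = \pi_U^* \gamma \wedge \pi_F^* \eta$, where $\pi_U, \pi_F$ are the two projections out of $U \times F$. For such a form, a direct computation using the definition of integration along fibers gives $p_! \beta = \gamma \cdot \int_F \eta$ when $\deg \eta = \dim F$ and zero otherwise (with the middle factor $\int_F \eta$ a function on $U$), so $f^* p_! \beta = f^* \gamma \cdot \bigl(f^*\!\int_F \eta\bigr)$. On the other side, $\widetilde{f}^* \beta = \pi_V^* f^* \gamma \wedge \pi_F^* \eta$ in our identifications, and applying $\overline{p}_!$ produces the same expression, so the two sides agree.

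The main obstacle will be the orientation bookkeeping. The formula is only meaningful once the orientations on $T_{\mathrm{fiber}} M$ and $T_{\mathrm{fiber}}(f^*M)$ are compatible, and I would need to verify that our convention assigns the orientation on the relative tangent bundle of $\overline{p}$ as the $\widetilde{f}$-pullback of the one on $p$. Since $\widetilde{f}$ restricts to a diffeomorphism on each fiber and the pullback fibration $f^*M \to S$ inherits its relative tangent bundle as $\widetilde{f}^* T_{\mathrm{fiber}} M$, this agreement is essentially built into the construction, though it does require spelling out to make the decomposable-form calculation go through unambiguously. Once orientation compatibility is settled, the local computation above closes the argument, and globalization is immediate since both $f^* \circ p_!$ and $\overline{p}_! \circ \widetilde{f}^*$ are local operators.
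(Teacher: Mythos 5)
The paper gives no proof of this proposition for smooth manifolds; it is stated as a standard fact about integration along fibers, and the paragraph following Proposition~2.4 is devoted instead to explaining how the statement carries over to the K-space setting (compatible Kuranishi charts and pullback CF-perturbations). Your proposal therefore supplies a proof the paper omits, and the route you take --- localize over $S$, trivialize $p$, verify in a trivialization, then check orientation compatibility of the relative tangent bundles --- is the standard one and is correct in spirit. The orientation remark at the end is exactly the right thing to flag, since $T_{\mathrm{fiber}}(f^*M) \cong \widetilde f^* T_{\mathrm{fiber}}M$ and the paper's push-forward convention is pinned to the orientation of the relative tangent bundle.

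One step, however, does not do what you claim: ``by linearity and a partition of unity on $F$, it suffices to check on decomposable forms $\beta = \pi_U^*\gamma \wedge \pi_F^*\eta$.'' A partition of unity on $F$ localizes the support in the $F$-direction but does not turn a general coefficient $a(x,y)$ into a finite sum of products $b(x)c(y)$; forms pulled back from the two factors and wedged together do not span $\Omega^*(U\times F)$, and Stone--Weierstrass-type approximation is only $C^0$, which is not obviously compatible with $p_!$ followed by a pullback along $f$. The clean fix is to drop the decomposability and compute directly with a coordinate monomial $\beta = a(x,y)\,dx^I\wedge dy^J$ on $U\times F$: then $p_!\beta = \pm\bigl(\int_F a(x,y)\,dy^J\bigr)\,dx^I$ (nonzero only when $|J|=\dim F$), and on the other side $\widetilde f^*\beta = a(f(v),y)\, f^*(dx^I)\wedge dy^J$ so $\overline p_!\widetilde f^*\beta = \pm\bigl(\int_F a(f(v),y)\,dy^J\bigr)\,f^*(dx^I)$, which visibly equals $f^*p_!\beta$ with the same sign once the orientation on the fibers is the pulled-back one. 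With that substitution the argument closes; the localization and orientation points are otherwise handled correctly.
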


\begin{prop}[Stokes type formula, {\cite[Theorem 9.28]{FOOO20}}]\label{Stokes-Pr}
Let $p\colon M \to N$ be a smooth map $($or a strongly smooth map from a space with Kuranishi structure to a smooth manifold$)$
\[{\rm d} p_! \beta = p_!  {\rm d}\beta +(-1)^{\dim M + \deg \beta} p\vert_{\partial M} \beta.\]
\end{prop}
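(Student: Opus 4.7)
The strategy is to localize on the base and then invoke the classical Stokes theorem fiberwise. Using a partition of unity $\{\chi_i\}$ on $N$ subordinate to a cover by trivializing neighborhoods, and noting that both sides of the claimed identity respect multiplication by $p^*\chi_i$ thanks to property (1) of the preceding Proposition (applied to the function $\chi_i$), it suffices to verify the formula in the local model $p\colon U \times F \to U$. The partition of unity argument is legitimate because $\chi_i$ has degree zero, so commuting $p_!$ and ${\rm d}$ past $p^*\chi_i$ introduces no signs, and the boundary pushforward $(p|_{\partial M})_!$ behaves in the same way.

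In the local model, write $\beta = \sum_I p^*\alpha_I \wedge \gamma_I$ with $\alpha_I \in \Omega^*(U)$ and $\gamma_I$ involving only fiber directions. By part (1) of the preceding Proposition, $p_!\beta = \sum_I \alpha_I \wedge p_!\gamma_I$, so
\[
{\rm d}(p_!\beta) = \sum_I \bigl( {\rm d}\alpha_I \wedge p_!\gamma_I + (-1)^{\deg\alpha_I}\alpha_I \wedge {\rm d}(p_!\gamma_I)\bigr),
\]
and the first piece equals $p_!\bigl(\sum_I p^*{\rm d}\alpha_I \wedge \gamma_I\bigr)$. The remaining task is to establish the purely vertical identity ${\rm d}(p_!\gamma) = p_!({\rm d}\gamma) \pm (p|_{\partial M})_!\gamma$ for $\gamma$ of fiber type on $F$, which is the ordinary Stokes theorem on the fiber: the vertical part of ${\rm d}\gamma$ yields the boundary contribution on $\partial F$, while the horizontal part accounts for the action of ${\rm d}$ on the fiber integral.

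The main obstacle is the sign bookkeeping. One must track (i) the sign $(-1)^{\deg\alpha_I}$ from anticommuting ${\rm d}$ past $\alpha_I$ in the decomposition above, (ii) the sign from commuting ${\rm d}$ past the vertical integration, which is governed by $\dim F = \dim M - \dim N$, and (iii) the compatibility between the boundary orientation on $\partial M$ and the orientation on $\partial F$ induced by the splitting $TM = p^*TN \oplus T_{\rm fiber}M$ fixed at the beginning of Section~2. These contributions reorganize into the uniform coefficient $(-1)^{\dim M + \deg\beta}$ on the boundary term, which is independent of the decomposition of $\beta$. In the Kuranishi setting, where $p$ is only strongly smooth on a space with Kuranishi structure, the above argument applies chart-by-chart after replacing trivializations by a good coordinate system with CF-perturbations, and the conventions are chosen precisely so that the local identities patch; this is the content of \cite[Theorem~9.28]{FOOO20} that is being quoted here.
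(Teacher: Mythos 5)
The paper does not prove this proposition: it is stated as a direct citation to~\cite[Theorem~9.28]{FOOO20}, so there is no ``paper's own proof'' to compare against. (Incidentally, the statement as printed has a typo: the boundary term should read $(p\vert_{\partial M})_!\,\beta$, with the integration-along-fibers applied.) Your sketch is therefore being compared against the standard derivation one would give for the cited result, and on that basis it is on the right track: localizing via a partition of unity subordinate to a trivializing cover, decomposing $\beta$ into horizontal and vertical pieces in the local model $U\times F$, applying property~(1) to peel off the horizontal factor, and reducing to classical Stokes on the fiber is exactly the standard argument.

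The weak point, and the only genuinely nontrivial part of this proposition, is the sign, and there your write-up stops at an assertion rather than a computation. Concretely, once you write out Stokes for $\int_{U\times F} d(p^*\alpha\wedge\beta)$ with $\deg\alpha = \dim M - 1 - \deg\beta$ and unwind the definitions, the boundary term acquires a sign $(-1)^{\dim N + \deg\alpha + 1}$, which simplifies to $(-1)^{\dim N + \dim M + \deg\beta}$ if the relative tangent bundle of $p\vert_{\partial M}$ is oriented by the ``outward-normal-first'' boundary orientation of the fiber $\partial F\subset F$. That is off from the claimed $(-1)^{\dim M + \deg\beta}$ by $(-1)^{\dim N}$. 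To land on the stated formula one must instead orient the relative tangent bundle of $p\vert_{\partial M}$ compatibly with the boundary orientation of $\partial M$ as a manifold-with-boundary inside $M$, which differs from the naive fiber-boundary orientation by exactly $(-1)^{\dim N}$ once the splitting $TM = p^*TN\oplus T_{\rm fiber}M$ is fixed. Your item~(iii) gestures at this but does not pin it down; since the entire content of the proposition is this sign, ``these contributions reorganize into $(-1)^{\dim M+\deg\beta}$'' needs to be replaced by the actual bookkeeping, with the boundary orientation convention on $\partial M$ stated explicitly and then propagated through the computation. As written, the argument would equally well ``establish'' the formula with $(-1)^{\dim N + \dim M + \deg\beta}$, and one cannot tell from your sketch which convention is in force.
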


We introduced the notions of a strongly smooth map and a weakly submersive strongly smooth map from a space
equipped with Kuranishi structure to a smooth manifold
in~\cite[Definition~3.40\,(4),~(5)]{FOOO20}. We call a space equipped with a Kuranishi structure a K-space for short.
For a proper weakly submersive strongly smooth map $p$ from a K-space $X$ to a manifold~$N$, we define the integration along fibers
using a CF-perturbation, see~\cite[Section~9.2]{FOOO20}.
In this note, we suppress the notation for Kuranishi structures or good coordinate systems as well as
CF perturbations. Refer the indicated places in~\cite{FOOO20} for detailed statements.
For the verification of the sign convention in the filtered $A_{\infty}$-relations, it is sufficient to treat the integration along fibers of
a proper weakly submersive strongly smooth map as if the one for proper submersion between smooth manifolds.

The statements above holds for a proper weakly submersive strongly smooth map $p$.
For Proposition \ref{base change}, $f^*M$ is the fiber product of $f\colon S \to N$ and $p\colon M \to N$.
When $S$ and $M$ are K-spaces with a strongly smooth map $f\colon S \to N$ in the sense of~\cite[Definition 3.40\,(4)]{FOOO20}
and a~weakly submersive strongly smooth map $p\colon M \to N$,
we have a compatible system of smooth maps from Kuranishi charts of the fiber product $S \times_N M$ to the manifold $N$ and
the obstruction bundle on a fiber product Kuranishi chart of $f^*M$ contains the pullback of the obstruction bundle on a~Kuranishi chart of $M$
as a subbundle.
Using the pullback CF perturbation on~$f^*M$, we obtain Proposition \ref{base change} in such a situation.

The integration along fibers changes the degree of differential forms by
\begin{equation}\label{deg pushout}
 \deg p_! \beta = \deg \beta - {\rm reldim}\ p.
 \end{equation}
Here ${\rm reldim} \ p = \dim X - \dim N$, where $\dim X$ is the dimension of $X$ in the sense of K-space, see~\cite[p.~52]{FOOO20}.
A tuple $(X, f_1\colon X \to M_1, f_2\colon X \to M_2)$ is called a smooth correspondence, if~$X$ is a K-space, $f_1$, $f_2$ are strongly smooth maps
and $f_1$ is weakly submersive.  After taking CF-perturbations, we define
\[{\rm Corr}_X\colon \ \Omega^*(M_2) \to \Omega^*(M_1)\]
by $(f_1)_! \circ (f_2)^*$.
For flat vector bundles ${\mathcal L}_i$ on $M_i$, $i = 1,2$, with a given isomorphism
\[f_1^* {\mathcal L}_1 \cong O_{f_1} \otimes f_2^* {\mathcal L}_2,\] where $O_{f_1}$ is the orientation bundle of the relative tangent bundle
of $f_1\colon X \to M_1$, Theorem~27.1 in~\cite{FOOO09II} gives
\[{\rm Corr}_X\colon \ \Omega^*(M_2, {\mathcal L}_2) \to \Omega^*(M_1, {\mathcal L}_1).\]

Using Proposition \ref{Stokes-Pr}, we have the following.

\begin{prop}[{\cite[Proposition 27.2]{FOOO20}}]\label{corrStokes}
\[  {\rm d} \circ {\rm Corr}_X \xi = {\rm Corr}_X \circ d \xi + (-1)^{\dim X + \deg \xi} {\rm Corr}_{\partial X} \xi \qquad {\rm for} \ \xi \in \Omega^*(M_2; {\mathcal L}_2).
\]
\end{prop}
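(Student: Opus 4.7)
The plan is to unravel the definition of the correspondence map and apply the Stokes-type formula of Proposition~\ref{Stokes-Pr} directly.

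First, recall that by definition $\text{Corr}_X \xi = (f_1)_! (f_2^* \xi)$, with the flat bundle coefficients handled by the given isomorphism $f_1^* \mathcal{L}_1 \cong O_{f_1} \otimes f_2^* \mathcal{L}_2$ as in Theorem~27.1 of~\cite{FOOO09II}. I will suppress the coefficient bundles in the notation and treat $\xi$ as a plain form on $M_2$, since the twisting data behaves functorially under pullback, pushforward, and exterior derivative.

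Next, I would compute $d \circ \text{Corr}_X \xi = d \bigl( (f_1)_! (f_2^* \xi) \bigr)$ and apply Proposition~\ref{Stokes-Pr} to the weakly submersive strongly smooth map $f_1 \colon X \to M_1$ with $\beta = f_2^* \xi$. This yields
\[
d \bigl( (f_1)_! (f_2^* \xi) \bigr) = (f_1)_! \bigl( d (f_2^* \xi) \bigr) + (-1)^{\dim X + \deg (f_2^* \xi)} (f_1|_{\partial X})_! \bigl( (f_2^* \xi)|_{\partial X} \bigr).
\]
Since pullback commutes with the exterior derivative, $d(f_2^* \xi) = f_2^* (d \xi)$, and since pullback preserves form degree, $\deg(f_2^* \xi) = \deg \xi$. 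Furthermore, restricting a pulled-back form to the boundary commutes with pulling back by the restricted map: $(f_2^* \xi)|_{\partial X} = (f_2|_{\partial X})^* \xi$.

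Substituting these identities back and recognizing the results as $\text{Corr}_X(d \xi)$ and $\text{Corr}_{\partial X}(\xi)$ respectively gives the desired formula. The computation is essentially mechanical once Proposition~\ref{Stokes-Pr} is in hand; the only point to watch is that the sign exponent $\dim X + \deg \xi$ matches because pullback by $f_2$ does not shift degree, whereas the pushforward by $f_1$ shifts it by $-\text{reldim}\, f_1$ as in~\eqref{deg pushout}, which is consistent with the boundary sign appearing in Proposition~\ref{Stokes-Pr}. The main conceptual obstacle, if any, is bookkeeping the flat-bundle coefficients and the CF-perturbation compatibility on $\partial X$, but these are subsumed in the conventions set up before the statement and in the cited Theorem~27.1 of~\cite{FOOO09II}.
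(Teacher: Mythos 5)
Your proof is correct and matches the paper's own (implicit) argument: the paper simply states the proposition as an immediate consequence of Proposition~\ref{Stokes-Pr}, citing [FOOO20, Proposition 27.2], and your computation --- applying the Stokes-type formula to $p=f_1$ with $\beta=f_2^*\xi$, then using that $f_2^*$ commutes with $d$, preserves degree, and commutes with restriction to $\partial X$ --- is exactly the detail being left to the reader. The aside about the degree shift of $(f_1)_!$ is superfluous for checking the sign (it comes directly from $\deg(f_2^*\xi)=\deg\xi$ in Proposition~\ref{Stokes-Pr}), but it does no harm.
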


Let $(X_{12}, f_{1,12}\colon X_{12} \to M_1, f_{2,12}\colon X_{12} \to M_2)$ and
$(X_{23}, f_{2,23}\colon X_{23} \to M_2, f_{3,23}\colon X_{23} \to M_3)$ be smooth correspondences with given isomorphisms
\begin{equation}\label{isomlocsys}
f_{1,12}^* {\mathcal L}_1 \cong O_{f_{1,12}} \otimes f_{2,12}^* {\mathcal L}_2, \qquad
f_{2,23}^* {\mathcal L}_2 \cong O_{f_{2,23}} \otimes f_{3,23}^* {\mathcal L}_3.
\end{equation}
Taking the fiber product $X_{13}$ over $f_{2,12}$ and $f_{2,23}$,
we obtain a smooth correspondence
\[
(X_{13}, f_{1,13}\colon X_{13} \to M_1, f_{3,13}\colon X_{13} \to M_3)
\]
with the isomorphism
\[f_{1,13}^* {\mathcal L}_1 \cong O_{f_{1,13}} \otimes f_{3,13}^* {\mathcal L}_3\]
induced by~\eqref{isomlocsys} and
\[O_{f_{1,13}} \cong g_1^* O_{f_{1,12}} \otimes g_2^* O_{f_{2,23}}.\]
Here we denote by $g_1\colon X_{13} \to X_{12}$ and $g_2\colon X_{13} \to X_{23}$ the projections of the fiber product of Kuranishi charts,
\begin{equation*}
\xymatrix{
&& X_{13} \ar[ld]\ar[rd] \\
& X_{12} \ar[ld]\ar[rd] && X_{23}\ar[ld]\ar[rd] \\
M_1 && M_2 && M_3.}
\end{equation*}

Then we have the following.
\begin{prop}[composition formula, {\cite[Theorem 10.21]{FOOO20}}]%\label{composition}
\[{\rm Corr}_{X_{13}} = {\rm Corr}_{X_{12}} \circ {\rm Corr}_{X_{23}}.\]
\end{prop}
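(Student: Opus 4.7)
The plan is to unfold both sides to the level of integration along fibers and pullbacks, then reduce the identity to a single application of base change (Proposition~\ref{base change}).

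First I would unfold the definition of the left-hand side. Writing $f_{1,13}=f_{1,12}\circ g_1$ and $f_{3,13}=f_{3,23}\circ g_2$, for $\xi \in \Omega^*(M_3;\mathcal L_3)$ I have
\[
{\rm Corr}_{X_{13}}\xi = (f_{1,13})_!\,(f_{3,13})^*\xi = (f_{1,12}\circ g_1)_!\,(f_{3,23}\circ g_2)^*\xi.
\]
Functoriality of pullback and the iterated integration along fibers (part (2) of the first proposition) then rewrites this as
\[
{\rm Corr}_{X_{13}}\xi = (f_{1,12})_!\,(g_1)_!\,(g_2)^*\,(f_{3,23})^*\xi.
\]

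Next, the key step is to commute $(g_1)_!(g_2)^*$ past the intermediate slot. The square
\[
\xymatrix{
X_{13} \ar[r]^{g_2} \ar[d]_{g_1} & X_{23} \ar[d]^{f_{2,23}} \\
X_{12} \ar[r]_{f_{2,12}} & M_2
}
\]
is the fiber product diagram defining $X_{13}$, so $g_1\colon X_{13}\to X_{12}$ is the pullback of the weakly submersive strongly smooth map $f_{2,23}\colon X_{23}\to M_2$ along $f_{2,12}$, and $g_2$ is the covering map. Proposition~\ref{base change} therefore gives
\[
(g_1)_!\,(g_2)^* = (f_{2,12})^*\,(f_{2,23})_!,
\]
which I would insert above to obtain
\[
{\rm Corr}_{X_{13}}\xi = (f_{1,12})_!\,(f_{2,12})^*\,(f_{2,23})_!\,(f_{3,23})^*\xi = {\rm Corr}_{X_{12}}\circ{\rm Corr}_{X_{23}}\,\xi.
\]

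Finally, I would check compatibility with the twisting by flat bundles. Under the isomorphisms~\eqref{isomlocsys}, applying $g_1^*$ and $g_2^*$ and combining using $O_{f_{1,13}}\cong g_1^*O_{f_{1,12}}\otimes g_2^*O_{f_{2,23}}$ yields exactly the prescribed isomorphism $f_{1,13}^*\mathcal L_1\cong O_{f_{1,13}}\otimes f_{3,13}^*\mathcal L_3$, so both sides act between the same twisted de~Rham complexes without an extra sign.

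The main obstacle, and the reason the formula is nontrivial in our setting, is that all the maps above are strongly smooth maps from K-spaces rather than honest submersions of manifolds: I must justify applying Proposition~\ref{base change} at the level of CF-perturbed integration along fibers. As indicated in the discussion following Proposition~\ref{base change}, the obstruction bundle of the fiber-product Kuranishi chart on $X_{13}$ contains the pullback of the one on $X_{23}$, and the pullback CF-perturbation from $X_{23}$ realizes the base change identity on the nose; the orientation matching $O_{f_{1,13}}\cong g_1^*O_{f_{1,12}}\otimes g_2^*O_{f_{2,23}}$ is then forced by the convention on fiber-product orientations from~\cite[Section~8.2]{FOOO09II}. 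With those ingredients in place, which are exactly the content of~\cite[Theorem~10.21]{FOOO20}, the manipulation above is a purely formal chain of equalities.
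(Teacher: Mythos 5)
Your proof is correct and is exactly the argument the paper gestures at when it says ``the composition formula is a consequence of the properties mentioned above'': decompose $f_{1,13}=f_{1,12}\circ g_1$, $f_{3,13}=f_{3,23}\circ g_2$, apply iterated integration along fibers, use the base change identity $(g_1)_!\,(g_2)^*=(f_{2,12})^*\,(f_{2,23})_!$ on the fiber-product square, and reassemble. Your treatment of the local-system twist via $O_{f_{1,13}}\cong g_1^*O_{f_{1,12}}\otimes g_2^*O_{f_{2,23}}$ and the remark about pullback CF-perturbations also match the paper's setup, so this is the intended proof.
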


See~\cite[Chapter 27]{FOOO20} in the case with coefficients in local systems, see~\cite[Theorems~27.1 and~27.2]{FOOO20}.
In fact, the composition formula is a consequence of the properties mentioned above.

\section[Definition of A\_{infty}-operations]{Definition of $\boldsymbol{A_{\infty}}$-operations}

Let $\{L_i\}$ be a relatively spin collection of Lagrangian submanifolds, which intersects cleanly in~$(X, \omega)$.
In a later argument, we glue the linearization operator of holomorphic polygons with a Cauchy--Riemann type operator
at each boundary marked point, which is sent to the clean intersection of
two branches of relatively spin Lagrangian submanifolds, to obtain a Cauchy--Riemann type operator on the unit disk.
For the orientation issue, the argument works for clean intersections of distinct relative spin pair of Lagrangian submanifolds
and clean self-intersection of a relative spin Lagrangian submanifold.
The description of the boundary of holomorphic polygons in Lagrangian immersion case is found in the paper by Akaho and Joyce~\cite{AJ}.
For the sign and orientation issue, the argument presented here is also valid for immersed Lagrangian submanifolds.
Denote by $R_{\alpha}$ a connected component of $L_i$ and $L_j$. (We also consider the case of self clean intersection.)

Let $(\Sigma, \partial \Sigma)$ be a bordered Riemann surface $\Sigma$ of genus $0$ and with connected boundary
and $\vec{z}=(z_0, \dots, z_k)$ boundary marked points respecting the cyclic order on $\partial \Sigma$.
Let $u\colon (\Sigma, \partial \Sigma) \to (X, \cup L_i)$ be a smooth map such that \smash{$u\bigl(\arc{z_j z_{j+1}}\bigr) \subset L_{i_j}$}, $j \mod k+1$, $u(z_j) \in R_{\alpha_j}$, where
$R_{\alpha_j}$ is a~connected component of $L_{i_{j-1}} \cap L_{i_j}$.
(For an immersed Lagrangian with clean self intersection, $R_{\alpha}$ is a connected component of the clean intersection.)
For such $u$ and $u'$, we introduce the equivalence relation $\sim$ so that $u \sim u'$ when $\int_{\Sigma} \omega = \int_{\Sigma'} \omega$ and the Maslov indices of $u$ and $u'$
are the same. Denote by $B$ the equivalence class.
In this note, the dimension of moduli spaces means their virtual dimension.

Consider the moduli space
\[{\mathcal M}_{k+1}\bigl(B;L_{i_0}, \dots, L_{i_k}; R_{\alpha_0}, \dots R_{\alpha_k}\bigr)\]
of bordered stable maps of genus 0, with connected boundary and $(k+1)$ boundary marked points, representing the class $B$.

Set ${\mathcal L}=\bigl(L_{i_0}, \dots, L_{i_k}\bigr)$ and ${\mathcal R}=\bigl(R_{\alpha_0}, \dots R_{\alpha_k}\bigr)$ and write
\[
{\mathcal M}_{k+1}(B;{\mathcal L};{\mathcal R})={\mathcal M}_{k+1}\bigl(B;L_{i_0}, \dots, L_{i_k}; R_{\alpha_0}, \dots R_{\alpha_k}\bigr).
\]
Denote by ${\rm ev}_j^B\colon {\mathcal M}_{k+1}(B;{\mathcal L}; {\mathcal R}) \to R_{\alpha_j}$ the evaluation map at $z_j$.

For a pair of Lagrangian submanifolds $L$, $L'$ which intersect cleanly, we constructed
the $O(1)$-local system $\Theta^-_{R_\alpha}$ on $R_{\alpha}$ in~\cite[Proposition 8.1.1]{FOOO09II}.
Here $R_{\alpha}$ is a connected component of $L \cap L'$.
In this note, we simply write it as $\Theta_{R_{\alpha}}$.

We recall the construction of $\Theta_{R_{\alpha}}$ briefly. We assume that $L$, $L'$ are equipped with spin structures. In the case of a relative spin pair, we take
$TX \oplus (V \otimes {\mathbb C})$ (on the 3-skeleton of~$X$) instead of~$TX$ and $TL \oplus V$ (resp.\ $TL' \oplus V$) (on the 2-skeleton of $L$ (resp.\ $L'$)
instead of~$TL$, (resp.~$TL'$).
Here $V$ is an oriented real vector bundle on the 3-skeleton of $X$ such that the restriction of~$w_2(V)$ to the 2-skeleton of $L$ (resp.\ $L'$) coincides
$w_2(TL)$ (resp.\ $w_2(TL')$).
The~relative spin structure with the background $V$ is a choice of spin structure of $V \oplus TL$, (resp.\ $V \oplus TL'$).
Then the argument goes in the same way.
See the proof of~\cite[Theorem 8.1.1]{FOOO09II}.
For a point $p$ in the self clean intersection of a Lagrangian immersion $i\colon \widetilde{L} \to X$, there are two local branches of the Lagrangian immersion,
i.e., $i_*\bigl(T_{p'}\widetilde{L}\bigr)$ and $i_*\bigl(T_{p''}\widetilde{L}\bigr)$ where $p', p'' \in \widetilde{L}$ with $p=i(p')=i(p'')$.
Then we run the argument below by replacing $T_p L$ and $T_p L'$ by \smash{$i_*\bigl(T_{p'}\widetilde{L}\bigr)$} and \smash{$i_*\bigl(T_{p''}\widetilde{L}\bigr)$}, respectively.

As written in~\cite[Section 8.8]{FOOO09II}, we consider the space ${\mathcal P}_{R_\alpha} (TL, TL')$ of paths of oriented Lagrangian subspaces in $T_pX$, $p \in R_{\alpha}$,
of the form $R_{\alpha} \oplus \lambda (t)$ such that $R_{\alpha} \oplus \lambda (0) = T_p L$ and $R_{\alpha} \oplus \lambda (1) = T_p L'$.
Here $\lambda$ is regarded as a path of Lagrangian subspaces in
\[V_{R_{\alpha}}=\bigl(T_p L + T_p L'\bigr)/ \bigl(T_p L + T_p L'\bigr)^{\perp_{\omega}} = \bigl(T_p L + T_p L'\bigr)/\bigl(T_p L \cap T_p L'\bigr),
\]
which is
a symplectic vector space. Pick a compatible complex structure on it and consider the Dolbeault operator $\overline{\partial}_{\lambda}$ on $Z_- = \bigl(D^2 \cap \{\operatorname{Re} z \leq 0 \}\bigr) \cup ([0, \infty) \times [0,1])$.

We set $\mu(R_{\alpha};\lambda) = {\rm Index~} \overline{\partial}_{\lambda}$.
The parity of $\mu(R_{\alpha}; \lambda)$ is independent of the choice of $\lambda$ above, since $\lambda \oplus T_p R_{\alpha}$ is a path
of oriented totally real subspaces of $T_pX$ with fixed end points,
$T_p L$, $T_p L'$, $p \in R_{\alpha}$ which are oriented.
Denote by $\mu(R_{\alpha}) = \mu(R_{\alpha};\lambda) \mod 2$.
Then we have
\begin{equation}\label{dim moduli}
\dim  {\mathcal M}_{k+1}(B;{\mathcal L}, {\mathcal R}) \equiv \dim  R_{\alpha_0} + \mu(R_{\alpha_0}) - \sum_{i=1}^k \mu\bigl(R_{\alpha_i}\bigr) +k -2 \mod 2.
\end{equation}
We have the determinant line bundle of \smash{$\big\{{\rm Index~} \overline{\partial}_{\lambda} \big\}_{\lambda \in {\mathcal P}_{R_\alpha} (TL, TL')}$}.
Pick a hermitian metric~on~$X$. Denote by $P_{\rm SO}(T_p R_{\alpha} \oplus \lambda) $ the associated oriented orthogonal frame bundle of
$T_p R_{\alpha} \oplus \lambda$.
Note that $P_{\rm SO}(T_p R_{\alpha} \oplus \lambda)\vert_{t=0}$ and $P_{\rm SO}(T_p R_{\alpha} \oplus \lambda)\vert_{t=1}$ are canonically identified with $P_{\rm SO}(L)\vert_p$ and
$P_{\rm SO}(L')\vert_p$, respectively.
We glue the principal spin bundle $P_{\rm Spin}(T_p R_{\alpha} \oplus \lambda)$ at $t=0, 1$ with $P_{\rm Spin}(L)\vert_p$ and $P_{\rm Spin}(L')\vert_p$.
There are two isomorphic classes of resulting spin structure on the bundle $TL \cup (\lambda \oplus T_p R_{\alpha}) \cup TL'$ on $L \cup [0,1] \cup L'$, where
$p \in L$ and $p \in L'$ are identified with ${0, 1 \in [0,1]}$, respectively.
This gives an $O(1)$-local system $O_{\rm Spin}$ on ${\mathcal P}_{R_{\alpha}}(TL, TL')$.
Proposition~8.1.1 in~\cite{FOOO09II} states that the tensor product $\det \overline{\partial}_{\lambda} \otimes O_{\rm Spin}$ descends to an $O(1)$-local system~$\Theta_{R_{\alpha}}$
on~$R_{\alpha}$.

We denote by
$\overline{\partial}_{R_{\alpha}}$ is the Dolbeault operator acting on sections of the trivial bundle
$Z_- \times (T_pR_{\alpha} \otimes {\mathbb C})$ on $Z_-$ with totally real boundary
condition $T_pR_{\alpha}$. Then the operator
$\overline{\partial}_{R_{\alpha_i} \oplus \lambda_i} = \overline{\partial}_{R_{\alpha_i}} \oplus \overline{\partial}_{\lambda_i}$
is the Dolbeault operator acting on the trivial bundle $Z_- \times T_p X$ on $Z_-$ with the totally real boundary condition $T_p R_{\alpha} \oplus \lambda$.
After gluing the linearization operator \smash{$D \overline{\partial}$} for a~holomorphic polygon with
\smash{$\overline{\partial}_{R_{\alpha_i} \oplus \lambda_i}$}, where
$R_{\alpha_i} \oplus \lambda_i \in {\mathcal P}_{R_{\alpha_i}} (TL_{i-1}, TL_i)$, $i=0, \dots, k$, we obtain a Cauchy--Riemann type operator
on the unit disk.
By~\cite[Theorem 8.1.1]{FOOO09II}, the relative spin structure for $\{L_i\}$, namely relative spin structures for each $L_i$ with a common oriented vector bundle~${V \to X^{[3]}}$, determines an isomorphism $\Phi^B$ below.
For the definition and properties of relative spin structure, see~\cite[Section 8.1.1]{FOOO09II}.

\begin{prop}[cf.\ {\cite[Theorem 8.1.1]{FOOO09II}}]
%\label{modulioricomparison}
A choice of relative spin structure determines the following isomorphisms.
\begin{enumerate}\itemsep=0pt
\item[$(1)$] Case that $k=0$ $(L$ is an immersed Lagrangian submanifold with clean self intersection or $R_{\alpha_0}=L)$:
\begin{gather*}
\Phi^B\colon \ {\rm ev}_0^{B*} \Theta_{R_{\alpha_0}} \to
{\rm ev}_0^{B*} O_{R_{\alpha_0}}^* \otimes O_{{\mathcal M}_{1}(B;L)}.
\end{gather*}
\item[$(2)$]
Case that $k = 1$:
\begin{gather*}
\Phi^B\colon \ {\rm ev}_0^{B*} \Theta_{R_{\alpha_0}}  \to
{\rm ev}_0^{B*} O_{R_{\alpha_0}}^* \otimes O_{{\mathcal M}_{2}(B;{\mathcal L};{\mathcal R})} \otimes {\mathbb R}_B \otimes
{\rm ev}_1^{B*} \Theta_{R_{\alpha_1}}  \\
\hphantom{\Phi^B\colon \ {\rm ev}_0^{B*} \Theta_{R_{\alpha_0}}\to}{}
 \cong (-1)^{\mu_{\alpha_1}}
 {\rm ev}_0^{B*} O_{R_{\alpha_0}}^* \otimes O_{{\mathcal M}_{2}(B;{\mathcal L};{\mathcal R})} \otimes
{\rm ev}_1^{B*} \Theta_{R_{\alpha_1}} \otimes {\mathbb R}_B .
\end{gather*}
\item[$(3)$] Case that $k \geq 2$:
\begin{gather*}
\Phi^B\colon \ {\rm ev}_0^{B*} \Theta_{R_{\alpha_0}} \to
{\rm ev}_0^{B*} O_{R_{\alpha_0}}^* \otimes O_{{\mathcal M}_{k+1}(B;{\mathcal L};{\mathcal R})} \otimes {\mathfrak{forget}}^* O_{{\mathcal M}_{k+1}}^*\\
\hphantom{\Phi^B\colon \ {\rm ev}_0^{B*} \Theta_{R_{\alpha_0}} \to}{}
\otimes
{\rm ev}_1^{B*} \Theta_{R_{\alpha_1}} \otimes \dots \otimes {\rm ev}_k^{B*} \Theta_{R_{\alpha_k}}.
\end{gather*}
\end{enumerate}
\end{prop}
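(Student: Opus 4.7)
The plan is to follow the strategy of the proof of [FOOO09II, Theorem 8.1.1], now in the clean intersection setting. The central tool is a gluing construction for determinant line bundles of Cauchy--Riemann operators on bordered Riemann surfaces. Given $u\in\mathcal{M}_{k+1}(B;\mathcal{L};\mathcal{R})$ and a choice at each boundary marked point $z_j$ of a representative $R_{\alpha_j}\oplus\lambda_j\in\mathcal{P}_{R_{\alpha_j}}(TL_{i_{j-1}},TL_{i_j})$, I would glue the linearization $D\overline{\partial}_u$ with the half-strip operator $\overline{\partial}_{R_{\alpha_j}\oplus\lambda_j}$ on $Z_-$ at each $z_j$ to produce a Cauchy--Riemann type operator $D\overline{\partial}_u^{\mathrm{glued}}$ on the closed unit disk. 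Its totally real boundary bundle inherits, from the relative spin structure with background $V$, a spin structure, which trivializes $\det D\overline{\partial}_u^{\mathrm{glued}}$ in the standard way.

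Next I would invoke the canonical gluing isomorphism
\[
\det D\overline{\partial}_u^{\mathrm{glued}} \cong \det D\overline{\partial}_u \otimes \bigotimes_{j=0}^{k} \det \overline{\partial}_{R_{\alpha_j}\oplus \lambda_j},
\]
together with the splitting $\overline{\partial}_{R_{\alpha_j}\oplus\lambda_j}=\overline{\partial}_{R_{\alpha_j}}\oplus\overline{\partial}_{\lambda_j}$. The first summand acts on $Z_-\times(T_pR_{\alpha_j}\otimes\mathbb{C})$ with boundary condition $T_pR_{\alpha_j}$ and has $\det$ canonically isomorphic to $O_{R_{\alpha_j}}^*$ at $p$; the tensor product of the second with $O_{\mathrm{Spin}}$ descends to $\Theta_{R_{\alpha_j}}$ by construction. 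Combining the spin trivialization of $\det D\overline{\partial}_u^{\mathrm{glued}}$ with these identifications yields an isomorphism between $\det D\overline{\partial}_u$ and the tensor product of the factors appearing in the target of $\Phi^B$, once $\det D\overline{\partial}_u$ has been identified with the orientation data of the moduli space via the virtual deformation-obstruction sequence involving $\ker D\overline{\partial}_u$, $\mathrm{coker}\, D\overline{\partial}_u$, and the automorphism Lie algebra of the domain.

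This last identification is case-dependent. For $k\geq 2$ the pointed disk $(\Sigma,\vec z)$ is stable and the forgetful map $\mathfrak{forget}\colon\mathcal{M}_{k+1}(B;\mathcal{L};\mathcal{R})\to\mathcal{M}_{k+1}$ is defined, which produces the factor $\mathfrak{forget}^*O^*_{\mathcal{M}_{k+1}}$. For $k=1$ the stabilizer of $(z_0,z_1)$ in $\mathrm{Aut}(D^2)$ is one-dimensional and its orientation line is the factor $\mathbb{R}_B$; the extra sign $(-1)^{\mu_{\alpha_1}}$ arises when $\mathbb{R}_B$ is commuted past $\mathrm{ev}_1^{B*}\Theta_{R_{\alpha_1}}$, whose parity equals $\mu(R_{\alpha_1})\bmod 2$. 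For $k=0$ the three-dimensional automorphism group is absorbed into the orientation of $\mathcal{M}_1(B;L)$ via a slice construction, yielding the stated form.

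The main obstacle I expect is sign bookkeeping: tracking parities of tensor factor commutations through the chain above, and verifying independence of the auxiliary choices $\lambda_j\in\mathcal{P}_{R_{\alpha_j}}(TL_{i_{j-1}},TL_{i_j})$, which is precisely where the descent of $\det\overline{\partial}_\lambda\otimes O_{\mathrm{Spin}}$ to the local system $\Theta_{R_{\alpha_j}}$ is essential. The dimension parity formula \eqref{dim moduli} is the key auxiliary input controlling several of these commutations.
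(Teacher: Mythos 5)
Your overall strategy---gluing the linearization $D\overline{\partial}_u$ with the half-strip operators $\overline{\partial}_{R_{\alpha_j}\oplus\lambda_j}$ at each marked point, trivializing the determinant of the resulting operator on the disk via the relative spin structure, splitting each $\overline{\partial}_{R_{\alpha_j}\oplus\lambda_j}=\overline{\partial}_{R_{\alpha_j}}\oplus\overline{\partial}_{\lambda_j}$, and reorganizing the determinant lines---is exactly the route the paper indicates (the paper essentially cites \cite[Theorem 8.1.1]{FOOO09II} and sketches the gluing setup in the paragraph preceding the proposition). However, there are two concrete gaps.

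First, for $k=0$ the relevant automorphism group is $\mathrm{Aut}(D^2,1)$, which is \emph{two}-dimensional, not three-dimensional as you state; this parity is precisely why, as the paper says, suppressing that factor produces no sign when it is exchanged with other tensor factors. If the group were odd-dimensional your ``absorption'' would introduce an uncontrolled sign, so the dimension matters and should be corrected.

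Second, your bookkeeping of the $O_{R_{\alpha_j}}$ factors does not reproduce the target of $\Phi^B$. The operator $\overline{\partial}_{R_{\alpha_j}}$ (constant boundary condition $T_pR_{\alpha_j}$ on $Z_-$) has kernel the constants and trivial cokernel, so $\det\overline{\partial}_{R_{\alpha_j}}\cong\Lambda^{\max}T_pR_{\alpha_j}$, i.e., $O_{R_{\alpha_j}}$ rather than $O_{R_{\alpha_j}}^*$. More importantly, applying your splitting at every $j$ produces an $O_{R_{\alpha_j}}$ factor for each $j=0,\dots,k$, whereas the statement has $O_{R_{\alpha_0}}^*$ only at $j=0$ and bare $\Theta_{R_{\alpha_j}}$ for $j\geq 1$. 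To cancel the extraneous $O_{R_{\alpha_j}}$ for $j\geq 1$ you need the distinction between $\Theta^+_{R_\alpha}$ and $\Theta^-_{R_\alpha}$ (corresponding to the two orientations in which the path $\lambda$ traverses $\mathcal{P}_{R_\alpha}(TL,TL')$---incoming versus outgoing marked point) together with the canonical trivialization $\Theta^+_{R_\alpha}\otimes O_{R_\alpha}\otimes\Theta^-_{R_\alpha}\cong\underline{\mathbb{R}}$ recorded in the paper's Remark. Without invoking this, your identification does not yield the asserted form of $\Phi^B$, so this step needs to be made explicit.
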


In item (1), we suppress the orientation bundle of the biholomorphic automorphism group ${\rm Aut}\bigl(D^2, 1\bigr)$, since ${\rm Aut}\bigl(D^2, 1\bigr)$ is
two-dimensional and does not affect the sign when we exchange ${\rm Aut}\bigl(D^2, 1\bigr)$ with other factors.
In item~(2), ${\mathbb R}_B$ is the group of translations in the domain $D^2 \setminus \{\pm 1\} \cong {\mathbb R} \times [0,1]$ and
${\mathcal M}_{2}(B;{\mathcal L};{\mathcal R})$ is the quotient of
\smash{$\widetilde{\mathcal M}_{2}(B;{\mathcal L};{\mathcal R})$} by the translation action of ${\mathbb R}_B$ on the domain,
\[
\widetilde{\mathcal M}_{2}(B;{\mathcal L};{\mathcal R}) = {\mathcal M}_{2}(B;{\mathcal L};{\mathcal R}) \times {\mathbb R}_B.
\]
The sign of the exchange of ${\mathbb R}_B$ and the index of \smash{$\overline{\partial}_{\lambda_{R_{\alpha_1}}}$} is $(-1)^{\mu_{\alpha_1}}$.
In item~(3), ${\mathcal M}_{k+1}$ is the moduli space of bordered Riemann surfaces of genus 0, connected boundary and
$(k+1)$ marked points on the boundary
and $\mathfrak{forget}\colon {\mathcal M}_{k+1}(B;{\mathcal L};{\mathcal R}) \to {\mathcal M}_{k+1}$ sends $[(\Sigma, \partial \Sigma, \vec{z}),u]$
to $[(\Sigma, \partial \Sigma, \vec{z})]$.
Here~$O_{R_{\alpha_0}}$, $O_{{\mathcal M}_{k+1}(B;{\mathcal L};{\mathcal R})}$ and~$O_{{\mathcal M}_{k+1}}$ are orientation bundles of $R_{\alpha_0}$,
${\mathcal M}_{k+1}(B;{\mathcal L};{\mathcal R})$ and ${\mathcal M}_{k+1}$, respectively.
We consider \smash{${\rm ev}_0^* O_{R_{\alpha_0}}^* \otimes O_{{\mathcal M}_{k+1}(B;{\mathcal L};{\mathcal R})}$} the orientation bundle of the relative
tangent bundle of ${\rm ev}_0\colon {\mathcal M}_{k+1}(B;{\mathcal L};{\mathcal R}) \to R_{\alpha_0}$.
In the notation in~\cite{FOOO09II}, we write
\[ {\mathcal M}_{k+1}(B;{\mathcal L};{\mathcal R}) = R_{\alpha_0} \times {}^{\circ} {\mathcal M}_{k+1}(B;{\mathcal L};{\mathcal R})\]
and
\[{\mathcal M}_{k+1}(B;{\mathcal L};{\mathcal R}) = {\mathcal M}_{k+1}(B;{\mathcal L};{\mathcal R})^{\circ} \times {\mathcal M}_{k+1}.\]
These descriptions are considered as the splitting of tangent spaces in the sense of Kuranishi structures.
One may consider $ {}^{\circ} {\mathcal M}_{k+1}(B;{\mathcal L};{\mathcal R})$ and ${\mathcal M}_{k+1}(B;{\mathcal L};{\mathcal R})^{\circ}$
as a fiber of ${\rm ev}_0$ and a fiber of $\mathfrak{forget}$, respectively.
Using these notations, we have
\begin{gather*}
 {\rm ev}_0^* O_{R_{\alpha_0}}^* \otimes O_{{\mathcal M}_{k+1}(B;{\mathcal L};{\mathcal R})}
= O_{ {}^{\circ} {\mathcal M}_{k+1}(B;{\mathcal L};{\mathcal R})},\\
O_{{\mathcal M}_{k+1}(B;{\mathcal L};{\mathcal R})} \otimes {\mathfrak{forget}}^* O_{{\mathcal M}_{k+1}}^* = O_{{\mathcal M}_{k+1}(B;{\mathcal L};{\mathcal R})^{\circ}}.
\end{gather*}
If we denote by ${\mathcal M}_2$ the quotient stack of a point by ${\mathbb R}_B$, (2) is written in (3) with $k=2$.
We give an orientation of \smash{${\mathcal M}_{k+1} = \bigl(\partial D^2\bigr)^{k+1}/{\rm Aut}\bigl(D^2, \partial D^2\bigr)$} as the orientation of the quotient space following~\cite[convention (8.2.1.2)]{FOOO09II}.
Then the orientation bundle of $ {\mathcal M}_{k+1}(B;{\mathcal L};{\mathcal R})$ is canonically isomorphic to the one of $ {\mathcal M}_{k+1}(B;{\mathcal L};{\mathcal R})^{\circ}$.
Hence, for
$
{\mathbf u}=\bigl[u\colon (\Sigma, \partial \Sigma, \vec{z}) \to \smash{\bigl(X, \bigcup_{L \in \mathcal L} L, \bigcup_{R_{\alpha} \in \mathcal R} R_{\alpha}\bigr)}\bigr]$,
the relative spin structure of $\mathcal L$, local sections $\sigma_{\alpha_i}$ of $O(1)$-local systems $\Theta_{\alpha_i}$ around $u(z_i)$, ${i = 0, 1, \dots, k}$,
determines a local orientation of the relative tangent bundle of
\smash{${\rm ev}_0^B\colon {\mathcal M}_{k+1}(B;{\mathcal L};{\mathcal R}) \to R_{\alpha}$},
 at $\mathbf u$, i.e.,
the kernel of \smash{$T_{\mathbf u} {\mathcal M}_{k+1}(B;{\mathcal L};{\mathcal R}) \to T_{u(z_0)}R_{\alpha_0}$}, which is denoted by ${\mathit o}(\sigma_{\alpha_0}; \sigma_{\alpha_1}, \dots, \sigma_{\alpha_k})$.

\begin{rem}
When $k=0$ and $R_{\alpha_0}=L$, the orientation on ${\mathcal M}_{1}(B;L)$ is given in~\cite[Section~8.4.1]{FOOO09II}
When $k=1$, the orientation bundle of ${\mathcal M}_{2}(B;{\mathcal L};{\mathcal R})$ is given in~\cite[Proposition~8.8.6]{FOOO09II}.
Note that $\Theta^+_{R_{\alpha}} \otimes O_{R_{\alpha}} \otimes \Theta^-_{R_{\alpha}}$ is canonically trivialized.
We write $\Theta_{R_{\alpha}}=\Theta_{R_{\alpha}}^-$ in this note.
\end{rem}

Since the evaluation maps are weakly submersive in the sense of Kuranishi structure, see \mbox{\cite[Definition~3.40\,(5)]{FOOO20}},
i.e., after taking sufficiently large obstruction bundles,
the evaluation maps on Kuranishi charts are submersive, the push-forward $({\rm ev}_0)_!$ is defined by taking CF-perturbations.
Hence, for a smooth correspondence $({\mathcal M}_{k+1}(B;{\mathcal L};{\mathcal R}), {\rm ev}_0, {\rm ev}_1 \times \dots \times {\rm ev}_k)$, Theorem 27.1 in~\cite{FOOO20} gives
\begin{equation*}
\bigl({\rm ev}_0^B\bigr)_! \circ \bigl({\rm ev}_1^{B*} \times \dots \times {\rm ev}_k^{B*}\bigr) \colon \ \Omega^*\bigl(R_{\alpha_1};\Theta_{R_{\alpha_1}}\bigr) \otimes \dots \otimes \Omega^*\bigl(R_{\alpha_k};\Theta_{R_{\alpha_k}}\bigr) \to
\Omega^*\bigl(R_{\alpha_0};\Theta_{R_{\alpha_0}}\bigr).
\end{equation*}
Namely, for $\xi_i=\zeta_i \otimes \sigma_{\alpha_i} \in \Omega^*\big(R_{\alpha_i}; \Theta_{\alpha_i}\big)$, $i = 1, \dots, k$, we define
\begin{gather}
\bigl({\rm ev}_0^B\bigr)_! \circ \bigl({\rm ev}_1^{B*} \times \dots \times {\rm ev}_k^{B*}\bigr)\bigl(\zeta_1 \otimes \sigma_{\alpha_1}, \dots, \zeta_k \otimes \sigma_{\alpha_k}\bigr) \nonumber \\
\qquad{} = \bigl({\rm ev}^B_0; {\mathit o}\bigl(\sigma_{\alpha_0}; \sigma_{\alpha_1}, \dots, \sigma_{\alpha_k}\bigr)\bigr)_! \bigl({\rm ev}_1^{B*} \zeta_1 \wedge \dots \wedge {\rm ev}_k^{B*}\zeta_k\bigr) \otimes \sigma_{\sigma_{\alpha_0}}.\label{pre m_k}
\end{gather}
Here $\bigl({\rm ev}^B_0;{\mathit o}\bigl(\sigma_{\alpha_0}; \sigma_{\alpha_1}, \dots, \sigma_{\alpha_k}\bigr)\bigr)_! $ is the integration along fibers with respect to the relative orientation
${\mathit o}\bigl(\sigma_{\alpha_0}; \sigma_{\alpha_1}, \dots, \sigma_{\alpha_k}\bigr)$ of ${\mathcal M}_{k+1}(B; {\mathcal L}; {\mathcal R}) \to R_{\alpha_0}$.
Note that the right hand side of~\eqref{pre m_k} does not depends on $\sigma_{\alpha_0}$, since $\sigma_{\alpha_0}$ appears twice in the right hand side of~\eqref{pre m_k}, and
gives a~differential form on $R_{\alpha_0}$ with coefficients in $\Theta_{\alpha_0}$.
For general $\xi_i \in \Omega^*\bigl(R_{\alpha_i}; \Theta_{\alpha_i}\bigr)$, we use partitions of unity on $R_{\alpha_i}$ and extend the operation
$\bigl({\rm ev}_0^B\bigr)_! \circ \bigl({\rm ev}_1^{B*} \times \dots \times {\rm ev}_k^{B*}\bigr)$ multi-linearly.

For $\xi \in \Omega^*(R_{\alpha};\Theta_{\alpha})$, we define the shifted degree
\begin{equation}\label{degree}
\vert \xi \vert' = {\rm deg~ }\xi + \mu(R_{\alpha}) -1.
\end{equation}

\begin{defn}\label{def} We set
${\mathfrak m}_{0,0}=0$,
${\mathfrak m}_{(1,0)} \xi = {\rm d} \xi$ on $\bigoplus \Omega^*(R_{\alpha}; \Theta_{R_\alpha})$, i.e.,
the de Rham differential on differential forms with coefficients in the local system $\Theta_{R_\alpha}$.
For $(k,B) \neq (1,0)$,
\begin{gather*}
{\mathfrak m}_{k,B}(\xi_1, \dots, \xi_k) \\
\qquad{}= (-1)^{\epsilon(\xi_1, \dots, \xi_k)} \bigl({\rm ev}^B_0\bigr)_! \circ \bigl({\rm ev}_1^{B*} \times \dots \times {\rm ev}_k^{B*}\bigr) (\xi_1 \otimes \dots,\otimes \xi_k) \in \Omega^*(R_{\alpha_0};\Theta_{\alpha_0}),
\end{gather*}
where $\xi_i \in \Omega^*\bigl(R_{\alpha_i};\Theta_{\alpha_i}\bigr)$ and
\begin{equation}\label{epsilon}
\epsilon(\xi_1, \dots, \xi_k) = \Biggl\{ \sum_{i=1}^k \Biggl( i+\sum_{p=1}^{i-1}\mu \bigl( R_{\alpha_p} \bigr) \Biggr) (\deg \xi_i -1) \Biggr\} + 1.
\end{equation}
Then we define
\begin{gather*}
{\mathfrak m}_k=\sum_B {\mathfrak m}_{k,B} T^{\langle \omega, B \rangle} \colon \
\bigotimes_{i=1}^k \Omega^*\bigl(R_{\alpha_i};\Theta_{\alpha_i} \otimes \Lambda_0\bigr)\bigl[1 - \mu\bigl(R_{\alpha_i}\bigr)\bigr]\\
\hphantom{{\mathfrak m}_k=\sum_B {\mathfrak m}_{k,B} T^{\langle \omega, B \rangle} \colon} \
\to \Omega^*(R_{\alpha_0};\Theta_{\alpha_0} \otimes \Lambda_0)[1 - \mu(R_{\alpha_0})].
\end{gather*}
Here
\[\Lambda_0 = \biggl\{ \sum_i a_i T^{\lambda_i} \mid a_i \in {\mathbb R},\, \lambda_i \to \infty \ {\rm as} \ i \to \infty\biggr\}\]
and the symbol $[1 - \mu(R_{\alpha})]$ is the degree shift by $1 - \mu(R_{\alpha})$, i.e., the grading of a differential form is given by $\vert \xi \vert' $.
By~\eqref{deg pushout} and~\eqref{dim moduli}, we find that
\begin{equation}\label{deg m_k}
\vert {\mathfrak m}_k(\xi_1, \dots, \xi_k) \vert' \equiv \sum_{i=1}^k \vert \xi_i \vert' + 1 \mod 2.
\end{equation}
\end{defn}
\begin{rem}
Since the aim of this note is describe the sign and orientation for the filtered $A_{\infty}$-operations, we use $\Lambda_0$ as the coefficient ring.
To make ${\mathfrak m}_k$ operations of degree 1, we need to use the universal Novikov ring $\Lambda_{0,{\rm nov}}$ introduced in~\cite{FOOO09I}.
\end{rem}

\section[Filtered A\_{infty}-relations]{Filtered $\boldsymbol{A_{\infty}}$-relations}
 In the rest of this note, we verify the sign convention in the filtered $A_{\infty}$-relations
 \begin{equation*}%\label{relation}
\sum_{k' + k'' = k+1} {\mathfrak m}_{k'} \circ \widehat{\mathfrak m}_{k''} (\xi_1, \dots, \xi_k) = 0 \qquad {\rm for} \ k=1,2, \dots
\end{equation*}
under the tree-like K-system and CF-perturbation described in~\cite{FOOO20}.
Here $\hat{\mathfrak m}_{k,B}$ is the extension of~${\mathfrak m}_{k,B}$ as a graded coderivation with respect to the shifted degree $\vert \bullet \vert'$.
This relation is equivalent to the following relations for decompositions of $B$ into $B'$ and $ B''$, $k'+k''=k+1$,
\begin{gather*}
 {\mathfrak m}_{1,0} \circ {\mathfrak m}_{k, B} (\xi_1, \dots, \xi_k) + {\mathfrak m}_{k,B} \circ \hat{\mathfrak m}_{1,0} (\xi_1, \dots, \xi_k)  \\
\qquad{} + \sum_{(k',B'), (k'', B'') \neq (1,0)} {\mathfrak m}_{k',B'} \circ \hat{\mathfrak m}_{k'',B''} (\xi_1, \dots, \xi_k) = 0.
\end{gather*}
We compute ${\mathfrak m}_{k',B'} \circ \hat{\mathfrak m}_{k'', B''}$.
For $(k,B)=(1,0)$, ${\mathfrak m}_{1,0} \circ {\mathfrak m}_{1,0} = 0$ clearly holds.

From now on, we investigate the case that $(k,B) \neq (1,0)$.
Firstly we consider the case that $(k',B')=(1,0)$ or $(k'',B'') = (1,0)$.
We find that
\begin{gather}
{\mathfrak m}_{1,0} \circ {\mathfrak m}_{k,B} (\xi_1, \dots, \xi_k) = (-1)^{\epsilon(\xi_1, \dots, \xi_k)} {\rm d} \bigl({\rm ev}_0^B\bigr)_! \bigl({\rm ev}_1^{B*} \xi_1 \wedge \dots \wedge {\rm ev}_k^{B*}\xi_k\bigr), \label{rel1}\\
{\mathfrak m}_{k,B} \circ \hat{\mathfrak m}_{1,0} (\xi_1, \dots, \xi_k) = \sum_{j=1}^k (-1)^{\sum_{p=1}^{j-1} \vert \xi_p \vert'} {\mathfrak m}_{k,B} (\xi_1, \dots, d\xi_j, \dots, \xi_k)
\nonumber \\ \hphantom{{\mathfrak m}_{k,B} \circ \hat{\mathfrak m}_{1,0} (\xi_1, \dots, \xi_k)}{}
= \sum_{j=1}^k (-1)^{\sum_{p=1}^{j-1} \vert \xi_p \vert' + \epsilon(\xi_1, \dots, d\xi_j, \dots, \xi_k)} \nonumber \\ \hphantom{{\mathfrak m}_{k,B} \circ \hat{\mathfrak m}_{1,0} (\xi_1, \dots, \xi_k)=}{}
 \times \bigl({\rm ev}_0^B\bigr)_!\bigl({\rm ev}_1^{B*}\xi_1 \wedge \dots \wedge {\rm ev}_j^{B*} d\xi_j \wedge \dots
\wedge {\rm ev}_k^{B*}\xi_k\bigr) \nonumber \\ \hphantom{{\mathfrak m}_{k,B} \circ \hat{\mathfrak m}_{1,0} (\xi_1, \dots, \xi_k)}{}
=  (-1)^{\epsilon(\xi_1, \dots, \xi_k) +1} \bigl({\rm ev}_0^B\bigr)_! {\rm d}\bigl({\rm ev}_1^{B*}\xi_1 \wedge \dots \wedge {\rm ev}_k^{B*} \xi_k\bigr). \label{rel2}
\end{gather}
Here we note that
\begin{gather*}
\sum_{p=1}^{j-1} \vert \xi_p \vert' + \epsilon(\xi_1, \dots, {\rm d}\xi_j, \dots, \xi_k)  =  \sum_{p=1}^{j-1} {\rm deg~ } \xi_p + \sum_{p=1}^{j-1} \bigl( \mu\bigl(R_{\alpha_p}\bigr)-1 \bigr) +
\epsilon(\xi_1, \dots, \xi_k)  \\ \hphantom{\sum_{p=1}^{j-1} \vert \xi_p \vert' + \epsilon(\xi_1, \dots, d\xi_j, \dots, \xi_k)  =}{}
  + \Biggl( j + \sum_{p=1}^{j-1} \mu\bigl(R_{\alpha_p}\bigr) \Biggr)  \\ \hphantom{\sum_{p=1}^{j-1} \vert \xi_p \vert' + \epsilon(\xi_1, \dots, d\xi_j, \dots, \xi_k) }{}
 \equiv  \sum_{p=1}^{j-1} {\rm deg~ }\xi_p + \epsilon(\xi_1, \dots, \xi_k) +1 \mod 2.
\end{gather*}

In order to compute ${\mathfrak m}_{k',B'} \circ \hat{\mathfrak m}_{k'', B''}$ for $(k',B'), (k'',B'') \neq (1,0)$,
we discuss the relation between the orientation bundle of
\[
{\mathcal M}_{k'+1}(B';{\mathcal L}';{\mathcal R}')_{{\rm ev}_j^{B'}} \times_{{\rm ev}_0^{B''}} {\mathcal M}_{k''+1}(B'';{\mathcal L}'';{\mathcal R}'')
 \]
 and~the orientation bundle of the boundary of $\partial {\mathcal M}_{k+1}(B;{\mathcal L}; {\mathcal R})$.
 The codimension $1$ boundary of the moduli space ${\mathcal M}_{k+1}(B;{\mathcal L}; {\mathcal R})$ is
the union of the fiber products of
${\mathcal M}_{k'+1}(B';{\mathcal L}';{\mathcal R}')$ and ${\mathcal M}_{k''+1}(B'';{\mathcal L}'';{\mathcal R}'')$ with respect to
the evaluation maps
\smash{${\rm ev}_j^{B'}\colon {\mathcal M}_{k'+1}(B';{\mathcal L}';{\mathcal R}') \to R_{\alpha}$} and
\smash{${\rm ev}_0^{B''}\colon {\mathcal M}_{k''+1}(B'';{\mathcal L}'';{\mathcal R}'') \to R_{\alpha}$},
where
\begin{alignat*}{3}
&{\mathcal L}'=\bigl(L_{i_0}, \dots, L_{i_{j-1}}, L_{i_{j+k''-1}}, \dots, L_{i_k}\bigr),\qquad&& {\mathcal L}''=\bigl(L_{i_{j-1}}, \dots, L_{i_{j+k''-1}}\bigr),& \\
&{\mathcal R}'=\bigl(R_{\alpha_0}, \dots , R_{\alpha_{j-1}}, R_{\alpha}, R_{\alpha_{j+k''}}, \dots, R_{\alpha_k}\bigr),\qquad&& {\mathcal R}''=\bigl(R_{\alpha}, R_{\alpha_{j}}, \dots R_{\alpha_{i_{j+k''-1}}}\bigr).&
\end{alignat*}
Here the union is taken over $k'$, $k''$ such that $k' + k'' = k+1$, all possible decomposition of $B$ into $B'$ and $B''$, $j=1, \dots, k'$, and
$R_{\alpha}$ a connected component of $L_{i_{j-1}} \cap L_{j+k''-1}$,.

 \begin{prop} \label{bdryori}
\begin{equation*}
(-1)^{\kappa} {\mathcal M}_{k'+1}\bigl(B';{\mathcal L}';{\mathcal R}'\bigr)_{{\rm ev}_j^{B'}} \times_{{\rm ev}_0^{B''}} {\mathcal M}_{k''+1}\bigl(B'';{\mathcal L}'';{\mathcal R}''\bigr)
\subset \partial {\mathcal M}_{k+1}(B;{\mathcal L}; {\mathcal R}),
\end{equation*}
where
\begin{gather*}
\kappa \equiv  (k''-1) (k'-j) +(k'-1) \Biggl( \mu(R_{\alpha}) - \sum_{p=j}^{j+k''-1} \mu\bigl(R_{\alpha_p}\bigr) \Biggr)  \\ \hphantom{\kappa\equiv}{}
  + \Biggl( \sum_{p=1}^{j-1}\mu\bigl(R_{\alpha_p}\bigr) \Biggr)
\Biggl(\mu(R_{\alpha}) - \sum_{p=j}^{j+k''-1} \mu\bigl(R_{\alpha_p}\bigr) \Biggr)
 \\ \hphantom{\kappa\equiv}{}
  + \dim R_{\alpha_0} + \mu(R_{\alpha_0}) - \Biggl( \sum_{p=1}^{j-1} \mu\bigl(R_{\alpha_p}\bigr) + \mu(R_{\alpha}) + \sum_{p=j+k''}^k \mu \bigl(R_{\alpha_p}\bigr) \Biggr) +k'.
\end{gather*}
\end{prop}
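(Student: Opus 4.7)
The plan is to apply the isomorphism $\Phi^B$ from the previous proposition to each of the moduli spaces $\mathcal M_{k+1}(B;\mathcal L;\mathcal R)$, $\mathcal M_{k'+1}(B';\mathcal L';\mathcal R')$ and $\mathcal M_{k''+1}(B'';\mathcal L'';\mathcal R'')$, and to compare the two resulting expressions for the orientation bundle at a point of the boundary stratum. On the fiber-product side, $\Phi^{B'}$ and $\Phi^{B''}$ express the orientation as a tensor product of local systems $\Theta_{R_{\alpha_p}}$ (including two copies of $\Theta_{R_\alpha}$, one from each factor, which get cancelled against the fiber-product identification $O_{{\rm ev}_j^{B'}} \otimes O_{{\rm ev}_0^{B''}}$), orientation bundles of the $R_{\alpha_p}$, and orientation bundles of $\mathcal M_{k'+1}$ and $\mathcal M_{k''+1}$. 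On the ambient side, $\Phi^B$ gives the same kind of expression but with the $\Theta$'s in the order $(\alpha_0;\alpha_1,\dots,\alpha_k)$ and the orientation bundle of $\mathcal M_{k+1}$; restriction to the boundary produces one additional factor from the outward normal.

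The crucial geometric input is gluing at the node: the linearized operator $D\overline{\partial}^B$ decomposes as $D\overline{\partial}^{B'}$, $D\overline{\partial}^{B''}$, together with the Dolbeault operator $\overline{\partial}_{R_\alpha\oplus\lambda}$ on a half-infinite strip attached at the new node. This gives an identification of determinant lines $\det D\overline{\partial}^B \cong \det D\overline{\partial}^{B'} \otimes \det \overline{\partial}_{R_\alpha\oplus\lambda} \otimes \det D\overline{\partial}^{B''}$, which, combined with the $O_{\rm Spin}$ twist from \cite[Proposition~8.1.1]{FOOO09II}, produces the $\Theta_{R_\alpha}$ factor that is eventually cancelled in the fiber product. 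The associahedron boundary embedding $\mathcal M_{k'+1}\times\mathcal M_{k''+1}\subset\partial\mathcal M_{k+1}$ contributes the classical sign $(-1)^{(k''-1)(k'-j)}$ under the convention \cite[(8.2.1.2)]{FOOO09II}, while the outward-normal convention for $\partial\mathcal M_{k+1}(B;\mathcal L;\mathcal R)$ produces the summand $\dim R_{\alpha_0}+\mu(R_{\alpha_0})-\bigl(\sum_{p=1}^{j-1}\mu(R_{\alpha_p})+\mu(R_\alpha)+\sum_{p=j+k''}^{k}\mu(R_{\alpha_p})\bigr)+k'$ via \eqref{dim moduli} modulo $2$.

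Once both sides are reduced to a tensor product of $\Theta$'s and orientation bundles, the sign $\kappa$ is the Koszul sign of the permutation that matches the two orderings. The factor $(k'-1)\bigl(\mu(R_\alpha)-\sum_{p=j}^{j+k''-1}\mu(R_{\alpha_p})\bigr)$ arises from moving the orientation bundle of ${}^\circ\mathcal M_{k'+1}(B')^\circ$ (of parity $k'-1$ modulo $2$ after extracting the $R_{\alpha_p}$ orientation factors) past the block of $\Theta$'s produced by $\Phi^{B''}$, whose total parity is $\mu(R_\alpha)-\sum_{p=j}^{j+k''-1}\mu(R_{\alpha_p})$ modulo $2$ (using \eqref{dim moduli} for $\mathcal M_{k''+1}(B'';\mathcal L'';\mathcal R'')$). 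The cross term $\bigl(\sum_{p=1}^{j-1}\mu(R_{\alpha_p})\bigr)\bigl(\mu(R_\alpha)-\sum_{p=j}^{j+k''-1}\mu(R_{\alpha_p})\bigr)$ arises from commuting the $\Theta$-block coming from $B''$ past the $\Theta_{R_{\alpha_1}},\dots,\Theta_{R_{\alpha_{j-1}}}$ produced by $\Phi^{B'}$ to put them in the required order $(\alpha_1,\dots,\alpha_k)$.

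The main obstacle is purely combinatorial: every $\Theta_{R_{\alpha_p}}$ carries $\mathbb Z/2$-degree $\mu(R_{\alpha_p})$, while the orientation bundles of the moduli spaces carry degrees expressible through \eqref{dim moduli}, so every transposition contributes a product of such parities, and one must group these contributions so that the total collapses to the four listed summands without leftover terms. I do not anticipate any conceptual ingredient beyond careful tracking of the isomorphism $\Phi^B$, the gluing identification of determinant lines, and the standard orientation convention on $\partial\mathcal M_{k+1}$.
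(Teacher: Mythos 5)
Your outline is essentially the paper's own strategy: compare $\Phi^B$ with $Sw\circ(\mathrm{id}\otimes\cdots\otimes\Phi^{B''}\otimes\cdots)\circ\Phi^{B'}$, track Koszul signs, and then compare the resulting local orientation with the outward-normal boundary orientation of ${\mathcal M}_{k+1}(B;{\mathcal L};{\mathcal R})$. The paper organizes $\kappa$ as $\delta_1+\delta_2+\delta_3$, and your four attributions match these up to rearrangement: your cross term is the paper's $\delta_1$ (the $Sw$ exchange), your associahedron term $(k''-1)(k'-j)$ and the $(k'-1)\bigl(\mu(R_\alpha)-\sum\mu\bigr)$ term together form $\delta_2$, and the remaining summand is $\delta_3=\dim{\mathcal M}_{k'+1}(B';{\mathcal L}';{\mathcal R}')$ reduced via~\eqref{dim moduli}. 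One small imprecision: in the paper the coefficient $k'-1$ arises as $\dim{\mathcal M}_{k'+1}+1$ from commuting ${}^{\circ}{\mathcal M}_{k''+1}(B'';{\mathcal L}'';{\mathcal R}'')^{\circ}$ past the marked-point factors $\prod(\partial D)_{z_i}$, rather than from a purported parity $k'-1$ of ${}^{\circ}{\mathcal M}_{k'+1}(B';{\mathcal L}';{\mathcal R}')^{\circ}$ (whose parity is actually $\mu(R_{\alpha_0})-\sum_{p}\mu\bigl(R'_{\alpha_p}\bigr)$ by~\eqref{dim moduli}); the end result agrees, but you would need to re-derive this step along the paper's lines to make the bookkeeping close up.
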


\begin{proof}
 Denote by $Sw$ the operation, which exchanges
 \[
 \Theta_{R_{\alpha_1}} \otimes \dots \otimes \Theta_{R_{\alpha_{j-1}}} \qquad \text{and} \qquad
 O_{R_{\alpha}}^* \otimes O_{{\mathcal M}_{k''+1}(B'';{\mathcal L}'';{\mathcal R}'')^{\circ}}.
 \]
Set the weight of $\Theta_{R_{\alpha_i}}$, $O_{R_{\alpha}}$ and $O_{{\mathcal M}_{k+1}(B;{\mathcal L};{\mathcal R})^{\circ}}$
as $\mu(R_{\alpha_i})$, $\dim R_{\alpha}$ and
\[
\dim {\mathcal M}_{k+1}(B;{\mathcal L};{\mathcal R})^{\circ} =
\dim {\mathcal M}_{k+1}(B;{\mathcal L};{\mathcal R}) - \dim {\mathcal M}_{k+1},
\]
 respectively.
Then the weighted sign of $Sw$ is $(-1)^{\delta_1}$, where
\begin{align*}
\delta_1  &{}=  \Biggl(\sum_{p=1}^{j-1}\mu\bigl(R_{\alpha_p}\bigr)\Biggr) \bigl(\dim {\mathcal M}_{k''+1}\bigl(B'';{\mathcal L}'';{\mathcal R}''\bigr) - \dim R_{\alpha} - \dim {\mathcal M}_{k''+1} \bigr) \\
 &{}\equiv  \Biggl( \sum_{p=1}^{j-1}\mu\bigl(R_{\alpha_p}\bigr)\Biggr) \Biggl( \mu(R_{\alpha}) - \sum_{p=j}^{j+k''-1}\mu\bigl(R_{\alpha_p}\bigr) \Biggr) \mod 2.
\end{align*}

Comparing $\Phi^B$ and $Sw \circ \bigl({\rm id} \otimes \dots \otimes {\rm id} \otimes \Phi^{B''} \otimes {\rm id} \otimes \dots \otimes {\rm id}\bigr) \circ \Phi^{B'}$, we find that
\[O_{{\mathcal M}_{k+1}(B;{\mathcal L};{\mathcal R})^{\circ}} \to
O_{{\mathcal M}_{k'+1}(B';{\mathcal L}';{\mathcal R}')^{\circ}} \otimes O_{R_{\alpha}}^* \otimes O_{{\mathcal M}_{k''+1}(B'';{\mathcal L}'';{\mathcal R}'')^{\circ}}\]
is $(-1)^{\delta_1}$-orientation preserving.\footnote{ $(-1)$-orientation preserving means orientation reversing. }
Here ${\mathcal M}_{k+1}(B;{\mathcal L};{\mathcal R})^{\circ}$ is the fiber of
${\mathcal M}_{k+1}(B;{\mathcal L};{\mathcal R}) \to {\mathcal M}_{k+1}$, i.e., the moduli space of bordered stable maps
with a fixed domain bordered Riemann surface equipped with fixed boundary marked points.
The $O(1)$-local system
\[
O_{{\mathcal M}_{k'+1}(B';{\mathcal L}';{\mathcal R}')^{\circ}} \otimes O_{R_{\alpha}}^* \otimes O_{{\mathcal M}_{k''+1}(B'';{\mathcal L}'';{\mathcal R}'')^{\circ}}
\]
is the orientation bundle of the fiber product
\[
{\mathcal M}_{k'+1}\bigl(B';{\mathcal L}';{\mathcal R}'\bigr)^{\circ}_{{\rm ev}_j^{B'}} \times_{{\rm ev}_0^{B''}} {\mathcal M}_{k''+1}\bigl(B'';{\mathcal L}'';{\mathcal R}''\bigr)^{\circ},
\]
which is the moduli space of bordered stable maps with a fixed boundary nodal Riemann surface equipped with fixed boundary marked points.

Now we compare the orientations of
\begin{gather*}
\partial {\mathcal M}_{k+1}(B;{\mathcal L};{\mathcal R})= \partial \bigl( {\mathcal M}_{k+1}(B;{\mathcal L};{\mathcal R})^{\circ} \times {\mathcal M}_{k+1} \bigr)
\end{gather*}
and
\begin{gather*}
{\mathcal M}_{k'+1}\bigl(B';{\mathcal L}';{\mathcal R}'\bigr)_{{\rm ev}_j^{B'}} \times_{{\rm ev}_0^{B''}} {\mathcal M}_{k''+1}\bigl(B'';{\mathcal L}'';{\mathcal R}''\bigr)\\
\qquad{}
=\bigl( {\mathcal M}_{k'+1}\bigl(B';{\mathcal L}';{\mathcal R}'\bigr)^{\circ} \times {\mathcal M}_{k'+1} \bigr)_{{\rm ev}_j^{B'}} \times_{{\rm ev}_0^{B''}}
\bigl( {\mathcal M}_{k''+1}\bigl(B'';{\mathcal L}'';{\mathcal R}''\bigr)^{\circ} \times {\mathcal M}_{k''+1} \bigr).
\end{gather*}
We note that $O_{{\mathcal M}_{k+1}(B;{\mathcal L};{\mathcal R}) } = {\mathbb R}_{{\rm out}} \otimes O_{\partial {\mathcal M}_{k+1}(B;{\mathcal L};{\mathcal R})}$.
Here ${\mathbb R}_{\rm out}$ is the normal bundle of the boundary oriented by the outer normal vector.
We pick local flat sections $\sigma_{\alpha_0}, \dots, \sigma_{\alpha_k}, \sigma_{\alpha}$ of $O(1)$-local systems $\Theta_{R_{\alpha_0}}, \dots, \Theta_{R_{\alpha_k}}, \Theta_{R_{\alpha}}$ and a~local orientation ${\mathit o}_{R_{\alpha_0}}$ of $R_{\alpha_0}$ around $u(z_0)$. Then we can equip
${\mathcal M}_{k+1}(B;{\mathcal L};{\mathcal R})$, ${\mathcal M}_{k'+1}(B';{\mathcal L}';{\mathcal R}')$ and the relative tangent bundle of
\[
{\rm ev}_0^{B''}\colon \ \mathcal M_{k''+1}(B'';{\mathcal L}'';{\mathcal R}'') \to R_{\alpha}
\]
with local orientations induced by them.
Then a local orientation of ${\mathcal M}_{k+1}(B;{\mathcal L};{\mathcal R}) = R_{\alpha_0} \times^{\circ}{\mathcal M}_{k+1}(B;{\mathcal L};{\mathcal R})$ is given by
${\mathit o}_{R_{\alpha_0}} \times o\bigl(\sigma_{\alpha_0};\sigma_{\alpha_1}, \dots, \sigma_{\alpha_k}\bigr)$.
As the fiber product of spaces with Kuranishi structures equipped with local orientations,
\begin{gather*}
\begin{split}
& {\mathcal M}_{k'+1}\bigl(B';{\mathcal L}';{\mathcal R}'\bigr)_{{\rm ev}_j^{B'}} \times_{{\rm ev}_0^{B''}} {\mathcal M}_{k''+1}\bigl(B'';{\mathcal L}'';{\mathcal R}''\bigr)\\
& \qquad{}= {\mathcal M}_{k'+1}\bigl(B';{\mathcal L}';{\mathcal R}'\bigr) \times {}^{\circ}{\mathcal M}_{k''+1}\bigl(B'';{\mathcal L}'';{\mathcal R}''\bigr)
\end{split}
\end{gather*}
is locally oriented by
\[o_{R_{\alpha_0}} \times o\bigl(\sigma_{R_{\alpha_0}};\sigma_{R_{\alpha_1}}, \dots, \sigma_{R_{\alpha_{j-1}}}, \sigma_{R_\alpha}, \sigma_{R_{\alpha_{j+k''}}}, \dots, \sigma_{R_{\alpha_k}} \bigr)
\times o\bigl(\sigma_{R_{\alpha}}; \sigma_{R_{\alpha_j}}, \dots, \sigma_{R_{\alpha_{j+k''-1}}}\bigr).\]
We fix $z_0=+1$, $z_j=-1$ and consider the spaces of $J$-holomorphic maps $\widetilde{\mathcal M}_{k+1}(B;{\mathcal L}, {\mathcal R})$,
\smash{$\widetilde{\mathcal M}_{k'+1}(B';{\mathcal L}', {\mathcal R}')$} and \smash{$\widetilde{\mathcal M}_{k''+1}(B'';{\mathcal L},'' {\mathcal R}'')$} such that
\begin{gather*}
{\mathcal M}_{k+1}(B;{\mathcal L};{\mathcal R})=\widetilde{\mathcal M}_{k+1}(B;{\mathcal L}, {\mathcal R})/{\mathbb R}_B,\\
{\mathcal M}_{k'+1}\bigl(B';{\mathcal L}';{\mathcal R}'\bigr)=\widetilde{\mathcal M}_{k'+1}\bigl(B';{\mathcal L}';{\mathcal R}'\bigr)/{\mathbb R}_{B'},
\end{gather*}
and
\[{\mathcal M}_{k''+1}\bigl(B'';{\mathcal L}'';{\mathcal R}''\bigr)=\widetilde{\mathcal M}_{k''+1}\bigl(B'';{\mathcal L},'' {\mathcal R}''\bigr)/{\mathbb R}_{B''}.\]
We may also write
\[\widetilde{\mathcal M}_{k+1}(B;{\mathcal L};{\mathcal R})={\mathcal M}_{k+1}(B;{\mathcal L}, {\mathcal R}) \times {\mathbb R}_B, \qquad {\rm etc.},\]
as oriented spaces.

The case that $z_0=+1$, $z_1 = -1$ is discussed in~\cite[p.~699]{FOOO09II}. The case that $z_0=+1$, $z_j = -1$ differs from the case that $z_0=+1$, $z_1 = -1$ by
an additional factor $(-1)^{j-1}$ as below.

For orientation issue, we consider the top-dimensional strata of the moduli spaces and regard
\smash{$\widetilde{\mathcal M}_{k+1}(B;{\mathcal L};{\mathcal R})$} as an open subset of
\[
{\mathcal M}_{k+1}(B;{\mathcal L};{\mathcal R})^{\circ} \times \prod_{i=1}^{j-1}(\partial D)_{z_i} \times
\prod_{i=j+1}^{j+k''-1}(\partial D)_{z_i} \times \prod_{i=j+k''}^k (\partial D)_{z_i}.
\]
We simply write
\[\widetilde{\mathcal M}_{k+1}(B;{\mathcal L};{\mathcal R})=(-1)^{j-1} {\mathcal M}_{k+1}(B;{\mathcal L};{\mathcal R})^{\circ} \times \prod_{i=1}^{j-1}(\partial D)_{z_i} \times
\prod_{i=j+1}^{j+k''-1}(\partial D)_{z_i} \times \prod_{i=j+k''}^k (\partial D)_{z_i},\]
where $z_0 = +1$, $z_j=-1$,
\[\widetilde{\mathcal M}_{k'+1}\bigl(B';{\mathcal L}';{\mathcal R}'\bigr)= (-1)^{j-1}{\mathcal M}_{k'+1}\bigl(B';{\mathcal L}';{\mathcal R}'\bigr)^{\circ} \times \prod_{i=1}^{j-1}(\partial D)_{z_i} \times
\prod_{i=j+k''}^k (\partial D)_{z_i},\]
where $z'_0=+1$, $z'_j=-1$,
and
\[\widetilde{\mathcal M}_{k''+1}\bigl(B'';{\mathcal L}'';{\mathcal R}''\bigr)= {\mathcal M}_{k''+1}\bigl(B'';{\mathcal L};''{\mathcal R}''\bigr)^{\circ} \times
\prod_{i=j+1}^{j+k''-1}(\partial D)_{z_i},\]
where $z''_0=+1$, $z''_1=-1$.

%\begin{rem}
Note that
\begin{gather*}
(-1)^{j-1} \prod_{i=1}^{j-1}(\partial D)_{z_i} \times \prod_{i=j+1}^{j+k''-1}(\partial D)_{z_i} \times \prod_{i=j+k''}^k (\partial D)_{z_i} = {\mathcal M}_{k+1} \times {\mathbb R}_B,\\
(-1)^{j-1} \prod_{i=1}^{j-1}(\partial D)_{z_i} \times \prod_{i=j+k''}^k (\partial D)_{z_i} = {\mathcal M}_{k'+1} \times {\mathbb R}_{B'}
\end{gather*}
and
\[\prod_{i=j+1}^{j+k''-1}(\partial D)_{z_i} = {\mathcal M}_{k'' + 1} \times {\mathbb R}_{B''}.\]
%\end{rem}

Marked points of ${\mathcal M}_{k'+1}(B';{\mathcal L}';{\mathcal R}')$ and ${\mathcal M}_{k''+1}(B'';{\mathcal L};''{\mathcal R}'')$
are related to marked points of ${\mathcal M}_{k+1}(B;{\mathcal L};{\mathcal R})$ in the following way.
\begin{gather*}
\bigl(z'_0, \dots, z'_{k'}\bigr) = \bigl(z_0, \dots, z_{j-1}, z'_j, z_{j+k''}, \dots, z_k\bigr),\\
\bigl(z''_0, z''_1,, \dots, z''_{k''}\bigr) = \bigl(z''_0, z_j, \dots, z_{j+k''-1}\bigr).
\end{gather*}
Here \smash{$z'_j$} and \smash{$z''_0$} are identified, i.e., the boundary node of the domain curve of an element in~${\mathcal M}_{k+1}(B;{\mathcal L};{\mathcal R})$.
Then we find that
\begin{gather}
  \widetilde{\mathcal M}_{k+1}(B;{\mathcal L};{\mathcal R})\nonumber\\
\quad{} =
(-1)^{\delta_1} \bigl( {\mathcal M}_{k'+1}\bigl(B';{\mathcal L}';{\mathcal R}'\bigr)^{\circ} ~_{{\rm ev}_j^{B'}} \times_{{\rm ev}_0^{B''}}
{\mathcal M}_{k''+1}\bigl(B'';{\mathcal L};''{\mathcal R}''\bigr)^{\circ} \bigr) \nonumber \\
\qquad{}  \times (-1)^{j-1}\prod_{i=1}^{j-1}(\partial D)_{z_i} \times
\prod_{i=j+1}^{j+k''-1}(\partial D)_{z_i} \times \prod_{i=j+k''}^k (\partial D)_{z_i} \nonumber \\
\quad{} =  (-1)^{\delta_1 + \delta_2} \Biggl( {\mathcal M}_{k'+1}\bigl(B';{\mathcal L}';{\mathcal R}'\bigr)^{\circ} \times (-1)^{j-1} \prod_{i=1}^{j-1}(\partial D)_{z_i} \times
\prod_{i=j+k''}^k (\partial D)_{z_i} \Biggr)
\nonumber \\
\qquad{}  {}\ {}_{{\rm ev}_j^{B'}}  \times_{{\rm ev}_0^{B''}} \Biggl( {\mathcal M}_{k''+1}\bigl(B'';{\mathcal L}'';{\mathcal R}''\bigr)^{\circ} \times \prod_{i=j+1}^{j+k''-1}(\partial D)_{z_i} \Biggr) \nonumber \\
\quad{} =  (-1)^{\delta_1 + \delta_2} \bigl( {\mathcal M}_{k'+1}\bigl(B';{\mathcal L}';{\mathcal R}'\bigr) \times {\mathbb R}_{B'} \bigr)
~_{{\rm ev}_j^{B'}} \times_{{\rm ev}_0^{B''}}
\bigl( {\mathcal M}_{k''+1}\bigl(B'';{\mathcal L}'';{\mathcal R}''\bigr)\! \times {\mathbb R}_{B''} \bigr) \nonumber \\
\quad{} =  (-1)^{\delta_1 + \delta_2 + \delta_3} {\mathbb R}_{B'-B''} \times \bigl( {\mathcal M}_{k'+1}\bigl(B';{\mathcal L}';{\mathcal R}'\bigr)\! ~_{{\rm ev}_j^{B'}} \times_{{\rm ev}_0^{B''}}
{\mathcal M}_{k''+1}\bigl(B'';{\mathcal L};''{\mathcal R}''\bigr) \bigr)
\! \times {\mathbb R}_{B'+B''} \nonumber \\
\quad{} =  (-1)^{\delta_1 + \delta_2 + \delta_3} {\mathbb R}_{\rm out} \times \bigl( {\mathcal M}_{k'+1}\bigl(B';{\mathcal L}';{\mathcal R}'\bigr)\! ~_{{\rm ev}_j^{B'}} \times_{{\rm ev}_0^{B''}}
{\mathcal M}_{k''+1}\bigl(B'';{\mathcal L};''{\mathcal R}''\bigr) \bigr)
\!  \times {\mathbb R}_{B}, \label{ori1}
\end{gather}
where
\begin{gather*}
\delta_2 =  \bigl(k''-1\bigr) \bigl(k' -j\bigr) + \bigl(k' -1\bigr) \bigl( \dim {\mathcal M}_{k''+1}\bigl(B'';{\mathcal L};''{\mathcal R}''\bigr)^{\circ} - \dim R_{\alpha} \bigr), \\
\delta_3 = \dim {\mathcal M}_{k'+1}\bigl(B';{\mathcal L}';{\mathcal R}'\bigr).
\end{gather*}
${\mathbb R}_{B'-B''}$ and ${\mathbb R}_{B'+B''}$ are the oriented lines spanned by $(1, -1), (1, 1) \in {\mathbb R}_{B'} \oplus {\mathbb R}_{B''}$, respectively.
Note that the ordered bases $(1,0), (0,1)$ and $(1, -1), (1,1)$ give the same orientation of ${\mathbb R}_{B'} \oplus {\mathbb R}_{B''}$,
${\mathbb R}_{B'-B''}$ and ${\mathbb R}_{B'+B''}$ are identified with ${\mathbb R}_{\rm out}$ and ${\mathbb R}_B$, respectively.

Here is an explanation of the second equality, i.e., the appearance of $(-1)^{\delta_2}$.
By the convention in~\cite[Section 8.2]{FOOO09II}, we have
\begin{gather*}
 {\mathcal M}_{k'+1}\bigl(B';{\mathcal L}';{\mathcal R}'\bigr)^{\circ} ~_{{\rm ev}_j^{B'}} \times_{{\rm ev}_0^{B''}}
{\mathcal M}_{k''+1}\bigl(B'';{\mathcal L};''{\mathcal R}''\bigr)^{\circ} \nonumber \\
\qquad{} = {\mathcal M}_{k'+1}\bigl(B';{\mathcal L}';{\mathcal R}'\bigr)^{\circ \circ} \times R_{\alpha} \times
{}^{\circ}{\mathcal M}_{k''+1}\bigl(B'';{\mathcal L};''{\mathcal R}''\bigr)^{\circ} \nonumber \\
\qquad{} = {\mathcal M}_{k'+1}\bigl(B';{\mathcal L}';{\mathcal R}'\bigr)^{\circ} \times {}^{\circ}{\mathcal M}_{k''+1}\bigl(B'';{\mathcal L};''{\mathcal R}''\bigr)^{\circ}, \nonumber
\end{gather*}
where
\[{\mathcal M}_{k'+1}\bigl(B';{\mathcal L}';{\mathcal R}'\bigr)^{\circ} = {\mathcal M}_{k'+1}\bigl(B';{\mathcal L}';{\mathcal R}'\bigr)^{\circ \circ} \times R_{\alpha}, \]
and
\[{\mathcal M}_{k''+1}\bigl(B'';{\mathcal L};''{\mathcal R}''\bigr)^{\circ} = R_{\alpha} \times {}^{\circ}{\mathcal M}_{k''+1}\bigl(B'';{\mathcal L};''{\mathcal R}''\bigr)^{\circ}.\]
Using these notations, we have
\begin{gather*}
\bigl( {\mathcal M}_{k'+1}\bigl(B';{\mathcal L}';{\mathcal R}'\bigr)^{\circ} ~_{{\rm ev}_j^{B'}} \times_{{\rm ev}_0^{B''}} \times{\mathcal M}_{k''+1}\bigl(B'';{\mathcal L};''{\mathcal R}''\bigr)^{\circ} \bigr)\\ \quad\qquad{}
 \times
\prod_{i=1}^{j-1}(\partial D)_{z_i} \times \prod_{i=j+1}^{j+k''-1}(\partial D)_{z_i} \times \prod_{i=j+k''}^k (\partial D)_{z_i}  \\ \qquad{}
= (-1)^{\gamma_1} \bigl( {\mathcal M}_{k'+1}\bigl(B';{\mathcal L}';{\mathcal R}'\bigr)^{\circ} ~_{{\rm ev}_j^{B'}} \times_{{\rm ev}_0^{B''}} \times{\mathcal M}_{k''+1}\bigl(B'';{\mathcal L};''{\mathcal R}''\bigr)^{\circ} \bigr)  \\ \quad\qquad{}
 \times
\prod_{i=1}^{j-1}(\partial D)_{z_i} \times \prod_{i=j+k''}^k (\partial D)_{z_i} \times \prod_{i=j+1}^{j+k''-1}(\partial D)_{z_i}  \\ \qquad{}
= (-1)^{\gamma_1} \bigl( {\mathcal M}_{k'+1}\bigl(B';{\mathcal L}';{\mathcal R}'\bigr)^{\circ} \times {}^{\circ}{\mathcal M}_{k''+1}\bigl(B'';{\mathcal L};''{\mathcal R}''\bigr)^{\circ} \bigr) \\ \quad\qquad{}
  \times
\prod_{i=1}^{j-1}(\partial D)_{z_i} \times \prod_{i=j+k''}^k (\partial D)_{z_i} \times \prod_{i=j+1}^{j+k''-1}(\partial D)_{z_i}  \\ \qquad{}
 =  (-1)^{\gamma_1+ \gamma_2 } {\mathcal M}_{k'+1}\bigl(B';{\mathcal L}';{\mathcal R}'\bigr)^{\circ} \times \prod_{i=1}^{j-1}(\partial D)_{z_i} \times
\prod_{i=j+k''}^k (\partial D)_{z_i}  \\ \quad\qquad{}
  \times {}^{\circ}{\mathcal M}_{k''+1}\bigl(B'';{\mathcal L};''{\mathcal R}''\bigr)^{\circ} \times \prod_{i=j+1}^{j+k''-1}(\partial D)_{z_i}  \\ \qquad{}
 =  (-1)^{\gamma_1 + \gamma_2} \Biggl( {\mathcal M}_{k'+1}\bigl(B';{\mathcal L}';{\mathcal R}'\bigr)^{\circ} \times \prod_{i=1}^{j-1}(\partial D)_{z_i} \times\prod_{i=j+k''}^k (\partial D)_{z_i} \Biggr)  \\ \quad\qquad{}
 {}\times _{{\rm ev}_j^{B'}} \times_{{\rm ev}_0^{B''}} \Biggl( {\mathcal M}_{k''+1}\bigl(B'';{\mathcal L}'';{\mathcal R}''\bigr)^{\circ} \times \prod_{i=j+1}^{j+k''-1}(\partial D)_{z_i} \Biggr),
\end{gather*}
where $\gamma_1 = (k''-1) (k' -j)$, i.e., $(-1)^{\gamma_1}$ is the sign of switching marked points $(z_{j+k''}, \dots, z_k)$ and $(z_{j+1}, \dots, z_{j+k''-1})$, and
$\gamma_2 = \dim ( {}^{\circ}{\mathcal M}_{k''+1}(B'';{\mathcal L};''{\mathcal R}'')^{\circ} ) ( \dim {\mathcal M}_{k'+1} +1 )$.
Then $\delta_2 = \gamma_1 + \gamma_2$.

Now we return to the discussion on local orientations of the orientation bundle of
\[
{\mathcal M}_{k'+1}(B';{\mathcal L}';{\mathcal R}')_{{\rm ev}_j^{B'}} \times_{{\rm ev}_0^{B''}} {\mathcal M}_{k''+1}(B'';{\mathcal L}'';{\mathcal R}'')
\qquad
\text{and} \qquad
\partial {\mathcal M}_{k+1}(B;{\mathcal L}; {\mathcal R}).
\]
Recall that
\begin{equation}\label{ori2}
\widetilde{\mathcal M}_{k+1}(B;{\mathcal L};{\mathcal R})={\mathcal M}_{k+1}(B;{\mathcal L}, {\mathcal R}) \times {\mathbb R}_B.
\end{equation}
Set $\kappa = \delta_1 + \delta_2 + \delta_3$, i.e.,
\begin{gather*}
\kappa \equiv \bigl(k''-1\bigr) \bigl(k'-j\bigr) +\bigl(k'-1\bigr) \Biggl( \mu(R_{\alpha}) - \sum_{p=j}^{j+k''-1} \mu\bigl(R_{\alpha_p}\bigr) \Biggr) \\ \hphantom{\kappa \equiv}{}
 + \Biggl( \sum_{p=1}^{j-1}\mu\bigl(R_{\alpha_p}\bigr) \Biggr)
\Biggl(\mu(R_{\alpha}) - \sum_{p=j}^{j+k''-1} \mu\bigl(R_{\alpha_p}\bigr) \Biggr)
 \\ \hphantom{\kappa \equiv}{}
 + \dim R_{\alpha_0} + \mu(R_{\alpha_0}) - \Biggl( \sum_{p=1}^{j-1} \mu\bigl(R_{\alpha_p}\bigr) + \mu(R_{\alpha}) + \sum_{p=j+k''}^k \mu \bigl(R_{\alpha_p}\bigr) \Biggr) +k'.
\end{gather*}

Comparing~\eqref{ori1} and~\eqref{ori2}, we obtain Proposition \ref{bdryori}.
\end{proof}

From Corollary \ref{iteratefib} in the setting of Kuranishi structures, Propositions~\ref{bdryori} and~\ref{base change}, i.e., the base change formula for
integration along fibers, we find the following.

\begin{lem}\label{compformula}
\begin{gather*}
\bigl( {\rm ev}_0^B \vert_{\partial {\mathcal M}_{k+1}(B;{\mathcal L};{\mathcal R})}; \partial o\bigl(\sigma_{\alpha_0};\sigma_{\alpha_1}, \dots, \sigma_{\alpha_k}\bigr) \bigr)_!
\Biggl(\prod_{i=1}^{j-1} {\rm ev}_i^{B*} \times \prod_{i=j+k''}^k {\rm ev}_i^{B*} \times \prod_{i=j}^{j+k''-1}{\rm ev}_i^{B*} \Biggr) \\
\quad{} = (-1)^{\kappa} \bigl( {\rm ev}_0^{B'};o\bigl(\sigma_{\alpha_0}; \sigma_{\alpha_1}, \dots, \sigma_{\alpha_{j-1}}, \sigma_{\alpha}, \sigma_{\alpha_{j+k''-1}}, \dots, \sigma_{\alpha_k}\bigr) \bigr)_!  \\
\qquad{} \circ \Biggl( \prod_{i=1}^{j-1} {\rm ev}_i^{B'*} \times \prod_{i=j+1}^{k'} {\rm ev}_i^{B'*} \times \Biggl( {\rm ev}_j^{B'*} \circ \bigl({\rm ev}_0^{B''}; o\bigl(\sigma_{\alpha}; \sigma_{\alpha_j}, \dots,
\sigma_{\alpha_{j+k''-1}}\bigr) \bigr)_! \circ \prod_{i=1}^{k''}{\rm ev}_i^{B''*} \Biggl)\! \Biggr)
\end{gather*}
as operations applied to
\[
\Biggl(\bigotimes_{i=1}^{j-1} \zeta_i\Biggr) \otimes \Biggl(\bigotimes_{i=j+k''}^k \zeta_i\Biggr) \otimes \Biggl(\bigotimes_{i=j}^{j+ k''-1} \zeta_i\Biggr),
\]
where
$\xi_i=\zeta_i \otimes \sigma_{\alpha_i}$, $i=1, \dots, k$.
Here $\partial o\bigl(\sigma_{\alpha_0};\sigma_{\alpha_1}, \dots, \sigma_{\alpha_k}\bigr)$ is the local orientation of the relative tangent bundle
$\partial {\mathcal M}_{k+1}(B;{\mathcal L}; {\mathcal R}) \to R_{\alpha_0}$ induced from
$o\bigl(\sigma_{\alpha_0};\sigma_{\alpha_1}, \dots, \sigma_{\alpha_k}\bigr)$.
\end{lem}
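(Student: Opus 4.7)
The plan is to derive Lemma~\ref{compformula} by combining the orientation identity of Proposition~\ref{bdryori} with the base change formula (Proposition~\ref{base change}) and the iterated pushforward identity (Corollary~\ref{iteratefib}). The essential observation is that the left-hand side is an integration along fibers over a single codimension-one boundary stratum of $\mathcal{M}_{k+1}(B;\mathcal{L};\mathcal{R})$, and Proposition~\ref{bdryori} identifies this stratum, as an oriented K-space over $R_{\alpha_0}$, with the fiber product
\[
\mathcal{M}_{k'+1}(B';\mathcal{L}';\mathcal{R}')_{{\rm ev}_j^{B'}} \times_{{\rm ev}_0^{B''}} \mathcal{M}_{k''+1}(B'';\mathcal{L}'';\mathcal{R}''),
\]
up to the explicit sign $(-1)^\kappa$.

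First I would use Proposition~\ref{bdryori}: integration with respect to the relative orientation $\partial o(\sigma_{\alpha_0};\sigma_{\alpha_1},\dots,\sigma_{\alpha_k})$ of the boundary equals $(-1)^\kappa$ times integration on the fiber product with respect to the product relative orientation $o(\sigma_{\alpha_0}; \sigma_{\alpha_1},\dots,\sigma_{\alpha_{j-1}},\sigma_{\alpha},\sigma_{\alpha_{j+k''}},\dots,\sigma_{\alpha_k}) \times o(\sigma_{\alpha};\sigma_{\alpha_j},\dots,\sigma_{\alpha_{j+k''-1}})$. On the fiber product, with projections $g_1,g_2$ to the two factors, the evaluation maps ${\rm ev}_i^B$ factor as ${\rm ev}_i^{B'}\circ g_1$ for $i\in\{1,\dots,j-1\}\cup\{j+k'',\dots,k\}$ and as ${\rm ev}_{i-j+1}^{B''}\circ g_2$ for $i\in\{j,\dots,j+k''-1\}$; this is precisely why the statement groups the inputs $\zeta_i$ in the order displayed.

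Next I would peel off the doubly-primed factor by base change. Applied to the Cartesian square with bottom arrows ${\rm ev}_j^{B'},{\rm ev}_0^{B''}$ and top arrows $g_2,g_1$, Proposition~\ref{base change} (with pullback CF-perturbations as discussed after Proposition~\ref{Stokes-Pr}) yields
\[
g_{1!}\bigl(g_2^*({\rm ev}_1^{B''*}\zeta_j\wedge\dots\wedge {\rm ev}_{k''}^{B''*}\zeta_{j+k''-1})\bigr) = ({\rm ev}_j^{B'})^*\bigl(({\rm ev}_0^{B''})_!({\rm ev}_1^{B''*}\zeta_j\wedge\dots\wedge {\rm ev}_{k''}^{B''*}\zeta_{j+k''-1})\bigr).
\]
The remaining forms ${\rm ev}_i^{B'*}\zeta_i$ with $i\notin\{j,\dots,j+k''-1\}$ are already pulled back through $g_1$, so by the projection formula (item~(1) of the first proposition of Section~2) they can be pulled outside $g_{1!}$. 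Finally, Corollary~\ref{iteratefib} gives $({\rm ev}_0^B)_!$ restricted to the fiber product as $({\rm ev}_0^{B'})_!\circ g_{1!}$, and assembling these three identities reproduces the right-hand side of the lemma with the overall sign $(-1)^\kappa$.

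The main obstacle I anticipate is bookkeeping of the relative orientations. One must verify that the relative orientations on the successive fibrations $g_1$, ${\rm ev}_0^{B'}$, and ${\rm ev}_0^{B''}$ induced from the local sections $\sigma_{\alpha},\sigma_{\alpha_i}$ are compatible under $g_1$-pullback, so that no hidden signs surface when applying Proposition~\ref{base change}. This compatibility is built into the construction of $o(\sigma_{\bullet};\sigma_{\bullet},\dots)$ through the isomorphisms $\Phi^B,\Phi^{B'},\Phi^{B''}$ and through the identification of $O_{f_{1,13}}$ with $g_1^*O_{f_{1,12}}\otimes g_2^*O_{f_{2,23}}$ recorded in Section~2, so the only orientation discrepancy between the boundary stratum and the fiber product is precisely the sign $(-1)^\kappa$ already supplied by Proposition~\ref{bdryori}.
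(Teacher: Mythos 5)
Your proposal is correct and follows essentially the same route that the paper indicates (the paper compresses the proof into the single sentence preceding the lemma, citing Corollary~\ref{iteratefib}, Proposition~\ref{bdryori}, and Proposition~\ref{base change}); you have simply spelled out the three steps that sentence packages together. One small redundancy: the ``projection formula'' step you invoke is already subsumed in Corollary~\ref{iteratefib}, so citing both is harmless but not necessary.
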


Note that $\partial o\bigl(\sigma_{\alpha_0};\sigma_{\alpha_1}, \dots, \sigma_{\alpha_k}\bigr)$
is not the boundary orientation of $\partial {}^{\circ}{\mathcal M}_{k+1}(B; {\mathcal L}; {\mathcal R})$ induced from the orientation
$o\bigl(\sigma_{\alpha_0};\sigma_{\alpha_1}, \dots, \sigma_{\alpha_k}\bigr)$ of ${}^{\circ}{\mathcal M}_{k+1}(B; {\mathcal L}; {\mathcal R})$.
They differ by $(-1)^{\dim  R_{\alpha_0}}$.
Namely, for ${\mathbf u} \in \partial {\mathcal M}_{k+1}(B;{\mathcal L}; {\mathcal R})$, the local orientation
$o\bigl(\sigma_{\alpha_0};\sigma_{\alpha_1}, \dots, \sigma_{\alpha_k}\bigr)$ of
${}^{\circ}{\mathcal M}_{k+1}(B; {\mathcal L}; {\mathcal R})$ and the local orientation
$\partial o\bigl(\sigma_{\alpha_0};\sigma_{\alpha_1}, \dots, \sigma_{\alpha_k}\bigr)$
of the relative tangent bundle of $\partial {\mathcal M}_{k+1}(B;{\mathcal L}; {\mathcal R})\allowbreak \to R_{\alpha_0}$
are related as follows:
\begin{gather*}
T_{\mathbf u} {\mathcal M}_{k+1}(B;{\mathcal L}; {\mathcal R}) = {\mathbb R}_{\rm out} \times T_{\mathbf u} \partial {\mathcal M}_{k+1}(B;{\mathcal L}; {\mathcal R}),\\
T_{\mathbf u} {\mathcal M}_{k+1}(B;{\mathcal L}; {\mathcal R}) = T_{u(z_0)} R_{\alpha_0} \times T_{\mathbf u} {}^{\circ}{\mathcal M}_{k+1}(B; {\mathcal L}; {\mathcal R}).
\end{gather*}
Then, under the following identification
\[{\mathbb R}_{\rm out} \times T_{\mathbf u} \partial {\mathcal M}_{k+1}(B;{\mathcal L}; {\mathcal R}) ={\mathbb R}_{\rm out} \times T_{u(z_0)} R_{\alpha_0} \times
T_{\mathbf u} {}^{\circ} \partial {\mathcal M}_{k+1}(B; {\mathcal L}; {\mathcal R}),\]
we define the local orientation $\partial o(\sigma_{\alpha_0};\sigma_{\alpha_1}, \dots, \sigma_{\alpha_k})$ of the relative tangent bundle of
$\partial {\mathcal M}_{k+1}(B;\allowbreak {\mathcal L}; {\mathcal R}) \to R_{\alpha_0}$
so that
\[
o_{R_{\alpha_0}} \times o(\sigma_{\alpha_0};\sigma_{\alpha_1}, \dots, \sigma_{\alpha_k}) = {\mathbb R}_{\rm out} \times o_{R_{\alpha_0}} \times
\partial o(\sigma_{\alpha_0};\sigma_{\alpha_1}, \dots, \sigma_{\alpha_k}).
\]
Note that
\begin{equation*}
{\rm ev}_i^B\vert_{\partial {\mathcal M}_{k+1}(B; {\mathcal L}; {\mathcal R})}=
\begin{cases}
{\rm ev}_i^{B'} \circ \pi^B_{B'}, & i=1, \dots, j-1, \\
{\rm ev}_{i-j+1}^{B''} \circ \pi^B_{B''}, & i=j, \dots, j+k''-1, \\
{\rm ev}_{i-k''+1}^{B'} \circ \pi^B_{B'},& i=j+k'', \dots, k,
\end{cases}
\end{equation*}
where
$\pi^B_{B'}$ and $\pi^B_{B''}$ are projections from the fiber product
\[{\mathcal M}_{k'+1}\bigl(B';{\mathcal L}';{\mathcal R}'\bigr)_{{\rm ev}_j^{B'}} \times_{{\rm ev}_0^{B''}} {\mathcal M}_{k''+1}\bigl(B'';{\mathcal L}'';{\mathcal R}''\bigr)\]
to ${\mathcal M}_{k'+1}(B';{\mathcal L}';{\mathcal R}')$ and ${\mathcal M}_{k''+1}(B'';{\mathcal L}'';{\mathcal R}'')$, respectively.
Note that $\sigma_{\alpha}$ appears twice in the right hand side of the equality in Lemma \ref{compformula}, hence the right hand side does not depends
on the choice of local section $\sigma_{\alpha}$ of the $O(1)$-local system $\Theta_{\alpha}$.

Next, we compute ${\mathfrak m}_{k', B'} \circ \hat{\mathfrak m}_{k'',B''}$ with $(k',B') \neq (1,0)$, $(k'',B'') \neq (1,0)$.
Armed with Lemma~\ref{compformula}, we regard $\xi_i$, $i = 1, \dots, k$, as differential forms on $R_{\alpha_i}$ in the computation below.

\begin{lem}\label{lemma}
\begin{equation}\label{lem}
{\mathfrak m}_{k', B'} \circ \hat{\mathfrak m}_{k'',B''} (\xi_1, \dots, \xi_k) = (-1)^{\kappa'} \bigl({\rm ev}_0^{(B',B'')}\bigr)_! \bigl({\rm ev}_1^{(B',B'')*} \xi_1 \wedge \dots \wedge {\rm ev}_k^{(B',B'')*} \xi_k\bigr),
\end{equation}
where
\begin{gather*}
\kappa' \equiv \epsilon(\xi_1, \dots, \xi_k) + \sum_{i=1}^{k} \deg \xi_i - k - 1 + j + k' \Biggl( \mu(R_{\alpha}) - \sum_{i=j}^{j+k''-1} \mu\bigl(R_{\alpha_i}\bigr) \Biggr)
 \\ \hphantom{\kappa' \equiv}{}
 +  \Biggl(\sum_{p=1}^{j-1} \mu\bigl(R_{\alpha_p}\bigr)\Biggr)\Biggl(\mu(R_{\alpha}) - \sum_{p=j}^{j+k''-1} \mu\bigl(R_{\alpha_p}\bigr)\Biggr) + \bigl(k'-j\bigr)k'' \mod 2.
\end{gather*}
\end{lem}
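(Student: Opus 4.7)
The plan is to expand the left-hand side of (\ref{lem}) using Definition \ref{def} and the graded coderivation convention, apply Lemma \ref{compformula} to identify the iterated integration along fibers with a single pushforward from the fiber product, and verify that the accumulated Koszul signs reduce mod~2 to $\kappa'$.

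First, unfolding $\hat{\mathfrak m}_{k'',B''}$ as a coderivation with respect to $|\cdot|'$ yields the Koszul prefactor $(-1)^{\sum_{p=1}^{j-1}|\xi_p|'}$ for its $j$-th summand. Substituting Definition \ref{def} for both ${\mathfrak m}_{k',B'}$ and ${\mathfrak m}_{k'',B''}$ converts the left-hand side into
\[(-1)^{S}\bigl({\rm ev}_0^{B'}\bigr)_!\bigl({\rm ev}_1^{B'*}\xi_1\wedge\cdots\wedge {\rm ev}_{j-1}^{B'*}\xi_{j-1}\wedge\omega\wedge {\rm ev}_{j+1}^{B'*}\xi_{j+k''}\wedge\cdots\wedge {\rm ev}_{k'}^{B'*}\xi_k\bigr),\]
where $\omega={\rm ev}_j^{B'*}({\rm ev}_0^{B''})_!({\rm ev}_1^{B''*}\xi_j\wedge\cdots\wedge {\rm ev}_{k''}^{B''*}\xi_{j+k''-1})$ and $S=\sum_{p<j}|\xi_p|'+\epsilon(\xi_1,\ldots,\eta_j,\ldots,\xi_k)+\epsilon(\xi_j,\ldots,\xi_{j+k''-1})$ with $\eta_j$ denoting the output of the inner ${\mathfrak m}_{k'',B''}$. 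To match the format of Lemma \ref{compformula}, I move $\omega$ to the right end of the wedge, introducing the Koszul sign $(-1)^{\deg\omega\cdot\sum_{p=j+k''}^k\deg\xi_p}$; here $\deg\omega$ is computed via (\ref{deg pushout}) from $\sum_{p=j}^{j+k''-1}\deg\xi_p$ and the relative dimension of ${\rm ev}_0^{B''}$, whose parity is controlled by (\ref{dim moduli}). Lemma \ref{compformula} then rewrites the resulting iterated pushforward as a boundary pushforward with reordered arguments, and restoring the natural order $(\xi_1,\ldots,\xi_k)$ produces a final Koszul permutation sign $(-1)^{(\sum_{p=j}^{j+k''-1}\deg\xi_p)(\sum_{p=j+k''}^k\deg\xi_p)}$. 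Proposition \ref{bdryori} identifies the boundary pushforward, up to $(-1)^\kappa$, with $({\rm ev}_0^{(B',B'')})_!({\rm ev}_1^{(B',B'')*}\xi_1\wedge\cdots\wedge {\rm ev}_k^{(B',B'')*}\xi_k)$.

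What remains is to verify that the sum of all contributed signs reduces modulo~$2$ to $\kappa'$. Splitting $\epsilon(\xi_1,\ldots,\eta_j,\ldots,\xi_k)$ via (\ref{epsilon}) into three parts (positions $1$ through $j-1$, position $j$ occupied by $\eta_j$, and the trailing positions whose indices in (\ref{epsilon}) have shifted by $k''-1$) and using (\ref{degree}) to pass between $\deg$ and $|\cdot|'$, the cross-term $(\sum_{p<j}\mu(R_{\alpha_p}))(\mu(R_\alpha)-\sum_{p=j}^{j+k''-1}\mu(R_{\alpha_p}))$ and the position-shift term $(k''-1)(k'-j)$ already present in $\kappa$ emerge naturally, and the leftover matches $\epsilon(\xi_1,\ldots,\xi_k)+\sum_i\deg\xi_i-k-1+j+k'(\mu(R_\alpha)-\sum_{p=j}^{j+k''-1}\mu(R_{\alpha_p}))+(k'-j)k''$. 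The main obstacle is this combinatorial bookkeeping, since three gradings (honest degree, shifted degree $|\cdot|'$, and Maslov parity) are simultaneously in play, and one must carefully track how a block of $k''$ arguments collapsing into a single $\eta_j$ at position $j$ interacts with the sum $\sum_{i=1}^{k}(i+\sum_{p<i}\mu(R_{\alpha_p}))(\deg\xi_i-1)$ defining $\epsilon$. No further geometric input beyond Lemma \ref{compformula} and Proposition \ref{bdryori} is required.
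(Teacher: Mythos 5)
Your plan goes through the boundary $\partial{\mathcal M}_{k+1}(B;{\mathcal L};{\mathcal R})$, but Lemma~\ref{lemma} is a statement purely about the fiber-product pushforward $\bigl({\rm ev}_0^{(B',B'')}\bigr)_!$ and never touches the boundary; the paper deliberately postpones the boundary identification to the proof of Theorem~\ref{conclusion} (the Claim there), where $(-1)^\kappa$ first enters. The paper's proof of Lemma~\ref{lemma} uses only Definition~\ref{def}, Proposition~\ref{base change} (base change, which gives ${\rm ev}_j^{B'*}\circ\bigl({\rm ev}_0^{B''}\bigr)_!=(\pi_{B'})_!\circ\pi_{B''}^*$) and Corollary~\ref{iteratefib}, converting the iterated pushforward into a single pushforward over the fiber product with only Koszul signs from rearranging the wedge factors: the sign $\delta_4$ from the coderivation convention plus the signs $\eta_1,\eta_2$ from moving $\omega$ to the end and back, and then $\delta_4+\eta_1+\eta_2=\kappa'$. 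No $\kappa$ appears.

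Because you invoke both Lemma~\ref{compformula} (which already carries the $(-1)^\kappa$ factor between the boundary and the iterated pushforward) and Proposition~\ref{bdryori} (which supplies $(-1)^\kappa$ again between the boundary and the fiber product), you accumulate $(-1)^\kappa$ twice, and these must cancel for your route to match the statement. Your write-up does not exhibit this cancellation; instead the final bookkeeping paragraph says the term $(k''-1)(k'-j)$ ``already present in $\kappa$'' emerges from the expansion and then declares the leftover equal to $\kappa'$ minus the cross-term, which sums to $\kappa'+(k''-1)(k'-j)$, not $\kappa'$. Note that $\kappa'$ contains $(k'-j)k''$ rather than $(k''-1)(k'-j)$; these two quantities differ by $k'-j$ mod~2, so the discrepancy is real. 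Either your route must explicitly display the cancellation of the two $\kappa$ contributions (in which case every term of $\kappa$, including $(k'-1)\bigl(\mu(R_\alpha)-\sum\mu\bigr)$ and $\dim R_{\alpha_0}+\mu(R_{\alpha_0})-\sum\mu+k'$, must be tracked and cancelled, not just the two you mention), or --- more cleanly --- you should drop the boundary detour entirely and argue as the paper does: substitute the definitions to get $\delta_4$, then apply Proposition~\ref{base change} and Corollary~\ref{iteratefib} to the nested $\bigl({\rm ev}_0^{B'}\bigr)_!\bigl(\cdots{\rm ev}_j^{B'*}\bigl({\rm ev}_0^{B''}\bigr)_!(\cdots)\cdots\bigr)$ and collect the Koszul signs $\eta_1+\eta_2$, after which the identity $\delta_4+\eta_1+\eta_2\equiv\kappa'\bmod 2$ is a direct check using~\eqref{degree},~\eqref{epsilon} and~\eqref{deg m_k}. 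Your computation of $\eta_1$ (from moving $\omega$, using~\eqref{deg pushout} and~\eqref{dim moduli} for $\deg\omega$) and $\eta_2$ (from restoring the natural order) is correct and matches the paper; it is the additional $\kappa$-machinery layered on top that introduces the error.
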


\begin{proof}
By the definition of ${\mathfrak m}_{k,B}$ and its extension $\widehat{\mathfrak m}_{k,B}$ as a graded coderivation, we have
\begin{gather}
{\mathfrak m}_{k', B'} \circ \hat{\mathfrak m}_{k'',B''} (\xi_1, \dots, \xi_k) \nonumber\\
 \qquad{} =
\sum_{j=1}^k (-1)^{\sum_{i=1}^{j-1} \vert \xi_i \vert'} {\mathfrak m}_{k',B'} \bigl(\xi_1, \dots, {\mathfrak m}_{k'',B''}\bigl(\xi_j, \dots, \xi_{j+k''-1}\bigr), \dots, \xi_k\bigr)\nonumber\\
 \qquad{}= \sum_{j=1}^k (-1)^{\delta_4} \bigl({\rm ev}_0^{B'}\bigr)_! \bigl( {\rm ev}_1^{B'*}\xi_1 \wedge \dots \wedge {\rm ev}_{j-1}^{B'*} \xi_{j-1}\nonumber\\
 \qquad\hphantom{= \sum_{j=1}^k}{}
  \wedge {\rm ev}_j^{B'*} \bigl( \bigl({\rm ev}_0^{B''}\bigr)_! \bigl({\rm ev}_1^{B''*} \xi_j \wedge \dots \wedge {\rm ev}_{k''}^{B''*} \xi_{j+k''-1} \bigr)\bigr) \wedge \dots \wedge {\rm ev}_{k'}^{B'*} \xi_k \bigr),\label{equality1}
\end{gather}
where
\begin{equation*}
\delta_4 = \sum_{i=1}^{j-1} \vert \xi_i \vert' + \epsilon\bigl(\xi_1, \dots, {\mathfrak m}_{k'',B''}\bigl(\xi_j, \dots, \xi_{j+k''-1}\bigr), \dots, \xi_k\bigr) + \epsilon\bigl(\xi_j, \dots, \xi_{j+k''-1}\bigr),
\end{equation*}
and
\smash{${\rm ev}_j^{(B',B'')}\colon {\mathcal M}_{k'+1}(B';{\mathcal L}';{\mathcal R}')_{{\rm ev}_j^{B'}} \times_{{\rm ev}_0^{B''}} {\mathcal M}_{k''+1}(B'';{\mathcal L}'';{\mathcal R}'') \to R_{\alpha_j}$} is
the evaluation map at the $j$-th marked point on the fiber product
\begin{equation*}
{\mathcal M}_{k'+1}\bigl(B';{\mathcal L}';{\mathcal R}'\bigr)_{{\rm ev}_j^{B'}} \times_{{\rm ev}_0^{B''}} {\mathcal M}_{k''+1}\bigl(B'';{\mathcal L}'';{\mathcal R}''\bigr).
\end{equation*}
Here the numbering of the marked points is the same as that on ${\mathcal M}_{k+1}(B;{\mathcal L};{\mathcal R})$.
We also have
\begin{gather*}
  \bigl({\rm ev}_0^{B'}\bigr)_! \bigl( {\rm ev}_1^{B'*}\xi_1 \wedge \dots \wedge {\rm ev}_{j-1}^{B'*} \xi_{j-1}
 \wedge {\rm ev}_j^{B'*} \bigl( \bigl({\rm ev}_0^{B''}\bigr)_! \bigl({\rm ev}_1^{B''*} \xi_j \wedge \dots \wedge {\rm ev}_{k''}^{B''*} \xi_{j+k''-1} \bigr)\bigr)  \\ \hphantom{\bigl({\rm ev}_0^{B'}\bigr)_! \bigl(}{}
   \wedge {\rm ev}_{j+1}^{B'*} \xi_{j+k''} \wedge \dots \wedge {\rm ev}_{k'}^{B'*} \xi_k \bigr)  \\
\qquad{} =  (-1)^{\eta_1}
\bigl({\rm ev}_0^{B'}\bigr)_! \bigl( \bigl( {\rm ev}_1^{B'*}\xi_1 \wedge \dots \wedge {\rm ev}_{j-1}^{B'*} \xi_{j-1} \wedge {\rm ev}_{j+1}^{B'*} \xi_{j+k''} \wedge \dots \wedge {\rm ev}_{k'}^{B'*} \xi_k \bigr)  \\
\hphantom{=(-1)^{\eta_1}\bigl({\rm ev}_0^{B'}\bigr)_! \bigl(}\qquad{}
  \wedge {\rm ev}_j^{B'*} \circ \bigl({\rm ev}_0^{B''}\bigr)_! \bigl({\rm ev}_1^{B''*} \xi_j \wedge \dots \wedge {\rm ev}_{k''}^{B''*} \xi_{j+k''-1} \bigr)\bigr)   \\
\qquad{} =  (-1)^{\eta_1}
\bigl({\rm ev}_0^{B'}\bigr)_! \bigl( \bigl( {\rm ev}_1^{B'*}\xi_1 \wedge \dots \wedge {\rm ev}_{j-1}^{B'*} \xi_{j-1} \wedge {\rm ev}_{j+1}^{B'*} \xi_{j+k''} \wedge \dots \wedge {\rm ev}_{k'}^{B'*} \xi_k \bigr)  \\
\hphantom{=(-1)^{\eta_1}\bigl({\rm ev}_0^{B'}\bigr)_! \bigl(}\qquad{}
  \wedge (\pi_{B'})_! \circ \pi_{B''}^* \bigl({\rm ev}_1^{B''*} \xi_j \wedge \dots \wedge {\rm ev}_{k''}^{B''*} \xi_{j+k''-1} \bigr)\bigr)  \\
\qquad{} =  (-1)^{\eta_1}
\bigl({\rm ev}_0^{B'}\bigr)_! \circ (\pi_{B'})_! \bigl( \pi_{B'}^* \bigl({\rm ev}_1^{B'*}\xi_1 \wedge \dots \wedge {\rm ev}_{j-1}^{B'*} \xi_{j-1} \wedge {\rm ev}_{j+1}^{B'*} \xi_{j+k''} \wedge \dots \wedge {\rm ev}_{k'}^{B'*} \xi_k\bigr)  \\
\hphantom{=  (-1)^{\eta_1}\bigl({\rm ev}_0^{B'}\bigr)_! \circ (\pi_{B'})_! \bigl(}\qquad{}
  \wedge \pi_{B''}^* \bigl({\rm ev}_1^{B''*} \xi_j \wedge \dots \wedge {\rm ev}_{k''}^{B''*} \xi_{j+k''-1} \bigr) \bigr)  \\
\qquad{} =  (-1)^{\eta_1 + \eta_2}
\bigl({\rm ev}_0^{B'} \circ \pi_{B'}\bigr)_! \bigl( \pi_{B'}^* \bigl({\rm ev}_1^{B'*}\xi_1 \wedge \dots \wedge {\rm ev}_{j-1}^{B'*} \xi_{j-1} \bigr)
 \\
\hphantom{=  (-1)^{\eta_1 + \eta_2}\bigl({\rm ev}_0^{B'} \circ \pi_{B'}\bigr)_! \bigl(}\qquad{}
  \wedge \pi_{B''}^* \bigl({\rm ev}_1^{B''*} \xi_j \wedge \dots \wedge {\rm ev}_{k''}^{B''*} \xi_{j+k''-1} \bigr) \\
\hphantom{=  (-1)^{\eta_1 + \eta_2}\bigl({\rm ev}_0^{B'} \circ \pi_{B'}\bigr)_! \bigl(}\qquad{}
  \wedge \pi_{B'}^* \bigl( {\rm ev}_{j+1}^{B'*} \xi_{j+k''} \wedge \dots \wedge {\rm ev}_{k'}^{B'*} \xi_k \bigr)\bigr) \\
\qquad{} =  (-1)^{\eta_1 + \eta_2}
\bigl({\rm ev}_0^{(B',B'')}\bigr)_! \bigl({\rm ev}_1^{(B',B'') *} \xi_1 \wedge \dots \wedge {\rm ev}_k^{(B',B'') *} \xi_k\bigr),
\end{gather*}
where
\begin{gather*}
\eta_1= \Biggl( \Biggl(\sum_{i=j}^{j +k'' -1} \deg\xi_i\Biggr) + \Biggl(\mu(R_{\alpha}) - \sum_{i=j}^{j+k''-1} \mu\bigl(R_{\alpha_i}\bigr) + k'' -2\Biggr) \Biggr) \Biggl(\sum_{i=j+k''}^k \deg \xi_i\Biggr),
\\
\eta_2 = \Bigg(\sum_{i=j}^{j +k'' -1} \deg \xi_i\Bigg) \Bigg(\sum_{i=j+k''}^k \deg \xi_i\Bigg).
\end{gather*}
The second equality is a consequence of Proposition \ref{base change} (base change formula) for integration along fibers, i.e., ${\rm ev}_j^{B'*} \circ \bigl({\rm ev}_0^{B''}\bigr)_! = (\pi_{B'})_! \circ \pi_{B''}^*$.
The third equality follows from Corollary \ref{iteratefib}.
Note that
\begin{equation*}
{\rm ev}_i^{(B',B'')} =
\begin{cases}
{\rm ev}_i^{B'} \circ \pi^B_{B'} & i= 0, 1, \dots, j-1, \\
{\rm ev}_{i-j+1}^{B''} \circ \pi^B_{B''}, & i= j, \dots, j + k'' -1, \\
{\rm ev}_{i -k''+1}{B'} \circ \pi^B_{B'}, & i= j + k'', \dots, k.
\end{cases}
\end{equation*}
We set
\[\delta_5 = \eta_1 + \eta_2 = \Biggl( \mu(R_{\alpha}) - \sum_{i=j}^{j+k''-1} \mu\bigl(R_{\alpha_i}\bigr) + k'' -2\Biggr) \Biggl(\sum_{i=j+k''}^k \deg \xi_i\Biggr).\]
Then we have
\begin{gather} % RS: загубили дужку
 \bigl({\rm ev}_0^{B'}\bigr)_! \bigl( {\rm ev}_1^{B'*}\xi_1 \wedge \dots \wedge {\rm ev}_{j-1}^{B'*} \xi_{j-1}
 \wedge {\rm ev}_j^{B'*} \bigl( \bigl({\rm ev}_0^{B''}\bigr)_! \bigl({\rm ev}_1^{B''*} \xi_j \wedge \dots \wedge {\rm ev}_{k''}^{B''*} \xi_{j+k''-1} \bigr)\bigr) \nonumber \\ \hphantom{\bigl({\rm ev}_0^{B'}\bigr)_! \bigl(}{}
 \wedge {\rm ev}_{j+1}^{B'*} \xi_{j+k''} \wedge \dots \wedge {\rm ev}_{k'}^{B'*} \xi_k \bigr) \nonumber \\
\qquad{} = (-1)^{\delta_5} \bigl({\rm ev}_0^{(B',B'')}\bigr)_! \bigl({\rm ev}_1^{(B',B'')*} \xi_1 \wedge \dots \wedge {\rm ev}_k^{(B',B'')*} \xi_k\bigr). \label{equality2}
\end{gather}
%\delta_5 & = & \bigl( \mu(R_{\alpha}) - \sum_{i=j}^{j+k''-1} \mu\bigl(R_{\alpha_i}\bigr) + k'' -2\bigr) (\sum_{i=j+k''}^k \deg \xi_i), \nonumber
Set $\kappa' = \delta_4 + \delta_5$, i.e.,
\begin{gather*}
\kappa' = \sum_{i=1}^{j-1} \vert \xi_i \vert' + \epsilon\bigl(\xi_1, \dots, {\mathfrak m}_{k'',B''}\bigl(\xi_j, \dots, \xi_{j+k''-1}\bigr), \dots, \xi_k\bigr) + \epsilon\bigl(\xi_j, \dots, \xi_{j+k''-1}\bigr)  \\ \hphantom{\kappa' =}{}
 + \Biggl( \mu(R_{\alpha}) - \sum_{i=j}^{j+k''-1} \mu\bigl(R_{\alpha_i}\bigr) + k'' -2\Biggr) \Biggl(\sum_{i=j+k''}^k \deg \xi_i\Biggr).
\end{gather*}

Recall the definitions of the shifted degree in~\eqref{degree} and the $\epsilon(\xi_1, \dots, \xi_k)$ in~\eqref{epsilon} and the fact on
the degree of ${\mathfrak m}_k$ \eqref{deg m_k}, we find that
\begin{gather*}
\kappa'\equiv \epsilon(\xi_1, \dots, \xi_k) + \sum_{i=1}^{k} \deg \xi_i - k - 1 + j + k' \Biggl( \mu(R_{\alpha}) - \sum_{i=j}^{j+k''-1} \mu\bigl(R_{\alpha_i}\bigr) \Biggr)
 \\ \hphantom{\kappa'\equiv }{}
+ \Biggl(\sum_{p=1}^{j-1} \mu\bigl(R_{\alpha_p}\bigr)\Biggr)\Biggl(\mu(R_{\alpha}) - \sum_{p=j}^{j+k''-1} \mu\bigl(R_{\alpha_p}\bigr)\Biggr) + \bigl(k'-j\bigr)k'' \mod 2.
\end{gather*}

Combining~\eqref{equality1} and~\eqref{equality2}, we obtain Lemma \ref{lemma}.
\end{proof}

Now we show the following.
\begin{thm}\label{conclusion}
The operations ${\mathfrak m}_k$, $k = 0, 1, \dots$, that is the Bott--Morse $A_{\infty}$-operation in the de Rham model, in Definition $\ref{def}$ satisfy the filtered $A_{\infty}$-relation
\begin{equation*}
\sum_{k' + k'' = k+1} {\mathfrak m}_{k'} \circ \widehat{\mathfrak m}_{k''} (\xi_1, \dots, \xi_k) = 0 \qquad {\rm for} \ k=1,2, \dots.
\end{equation*}
\end{thm}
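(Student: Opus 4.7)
The plan is to fix $k \geq 1$ and a class $B$, group the terms of the $A_{\infty}$-relation according to whether $(k',B')=(1,0)$, $(k'',B'')=(1,0)$, or neither, and show that the first two special terms add up (via Stokes) to exactly cancel the sum of the remaining terms (given by Lemma~\ref{lemma}). The $(1,0)$-$(1,0)$ case is trivial since $\mathfrak{m}_{1,0}\circ\mathfrak{m}_{1,0}={\rm d}^2=0$ on differential forms with coefficients in the flat local system $\Theta_{R_\alpha}$.

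First I would combine formulas~\eqref{rel1} and~\eqref{rel2} and apply the Stokes-type formula (Proposition~\ref{Stokes-Pr}, or equivalently the correspondence version Proposition~\ref{corrStokes}) to the integration along fibers of ${\rm ev}_0^B\colon{\mathcal M}_{k+1}(B;{\mathcal L};{\mathcal R})\to R_{\alpha_0}$. The sum $\mathfrak{m}_{1,0}\circ\mathfrak{m}_{k,B}+\mathfrak{m}_{k,B}\circ\hat{\mathfrak{m}}_{1,0}$ thereby collapses, up to an explicit sign depending on $\epsilon(\xi_1,\dots,\xi_k)$, $\dim {\mathcal M}_{k+1}(B;{\mathcal L};{\mathcal R})$, and $\sum\deg\xi_i$, to the pushforward
\[
\pm\bigl({\rm ev}_0^B\vert_{\partial{\mathcal M}_{k+1}(B;{\mathcal L};{\mathcal R})};\partial o(\sigma_{\alpha_0};\sigma_{\alpha_1},\dots,\sigma_{\alpha_k})\bigr)_!\bigl({\rm ev}_1^{B*}\xi_1\wedge\cdots\wedge {\rm ev}_k^{B*}\xi_k\bigr).
\]
I would simplify this sign using the dimension formula~\eqref{dim moduli} and the shifted-degree identity~\eqref{degree} so that it is expressed purely in terms of $\epsilon$, $\mu(R_{\alpha_p})$, $\deg\xi_i$, and the dimension of $R_{\alpha_0}$.

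Next I would use the known codimension-one boundary decomposition of the moduli space (the union, over $k'+k''=k+1$, $B=B'+B''$, $j=1,\dots,k'$, and connected components $R_\alpha$ of $L_{i_{j-1}}\cap L_{i_{j+k''-1}}$, of the fiber products ${\mathcal M}_{k'+1}(B';{\mathcal L}';{\mathcal R}')\,_{{\rm ev}_j^{B'}}\!\times_{{\rm ev}_0^{B''}}{\mathcal M}_{k''+1}(B'';{\mathcal L}'';{\mathcal R}'')$), and apply Proposition~\ref{bdryori} to rewrite each boundary piece as $(-1)^\kappa$ times the natural fiber product orientation. Each such fiber-product contribution is exactly the object computed by Lemma~\ref{lemma}: namely $(-1)^{\kappa'}$ times $({\rm ev}_0^{(B',B'')})_!({\rm ev}_1^{(B',B'')*}\xi_1\wedge\cdots\wedge {\rm ev}_k^{(B',B'')*}\xi_k)$, which in turn equals $(-1)^{\epsilon(\xi_1,\dots,\xi_k)+\kappa'}\,\mathfrak{m}_{k',B'}\circ\hat{\mathfrak{m}}_{k'',B''}(\xi_1,\dots,\xi_k)$.

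The heart of the proof—and the main obstacle—is the sign verification: for every $(k',B',k'',B'',j,R_\alpha)$ appearing in the boundary, I must show that the overall sign from (Stokes sign) $+$ $\kappa$ (from Proposition~\ref{bdryori}) $+$ $\kappa'$ (from Lemma~\ref{lemma}) reduces to $\epsilon(\xi_1,\dots,\xi_k)+1\pmod 2$, so that each boundary contribution cancels the corresponding composition $\mathfrak{m}_{k',B'}\circ\hat{\mathfrak{m}}_{k'',B''}$ in the $A_\infty$-sum. The terms involving $\mu(R_\alpha)-\sum_{p=j}^{j+k''-1}\mu(R_{\alpha_p})$ appear symmetrically in $\kappa$ and $\kappa'$ and telescope; the factor $(k''-1)(k'-j)$ in $\kappa$ combines with $(k'-j)k''$ in $\kappa'$ to a manageable residue mod $2$; and the degree terms $\sum\deg\xi_i$ entering via Stokes match those produced by Lemma~\ref{lemma} through the shifted-degree identity~\eqref{deg m_k}. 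Once this congruence is verified term by term, the total sum of boundary contributions over all decompositions vanishes, and combined with the two $(1,0)$-special terms yields the $A_\infty$-relation. This completes the proof.
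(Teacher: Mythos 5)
Your proposal reproduces the paper's own proof essentially step by step: split the $A_\infty$-sum into the terms where one of $(k',B')$, $(k'',B'')$ equals $(1,0)$ (handled via~\eqref{rel1},~\eqref{rel2} and the Stokes-type formula Proposition~\ref{corrStokes}) and the remaining terms (handled by Lemma~\ref{lemma} together with the boundary orientation Proposition~\ref{bdryori}), and then verify the sign congruence $\nu+\kappa+\kappa'\equiv\epsilon(\xi_1,\dots,\xi_k)+1\pmod 2$ so that the two collections cancel. The one slip is in your closing sentence, which asserts that the boundary contributions over all decompositions vanish on their own; it is their sum together with the two $(1,0)$-special terms that vanishes, exactly as your preceding sign-verification paragraph correctly explains.
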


\begin{proof}
By Proposition \ref{bdryori}, we find the following.

\begin{clm}\label{rel3}
The summation of the right hand side of~\eqref{lem} over $k'$, $k''$, $B'$, $B''$ such that
${k' + k'' = k+1}$, $B=B' + B''$, $(k', B'), (k'', B'') \neq (1,0)$ is equal to
\[ (-1)^{\kappa + \kappa'} \bigl({\rm ev}^B_0\vert_{\partial {\mathcal M}_{k+1}(B;{\mathcal L}; {\mathcal R})}\bigr)_! \bigl({\rm ev}_1^{B*}\xi_1 \wedge \dots\wedge {\rm ev}_k^{B*} \xi_k\bigr).\]
\end{clm}

Note that
\begin{align*}
\kappa + \kappa' &{} \equiv \epsilon(\xi_1 ,\dots, \xi_k) + 1 + k + \sum_{i=1}^k \deg \xi_i + \dim R_{\alpha_0} + \mu(R_{\alpha_0}) - \sum_{p=1}^k
\mu\bigl(R_{\alpha_p}\bigr)  \\
&{} \equiv \epsilon(\xi_1 ,\dots, \xi_k) + 1 + \dim {\mathcal M}_{k+1}(B;{\mathcal L};{\mathcal R}) + \sum_{i=1}^k \deg \xi_i \mod 2 .
\end{align*}

Using Proposition \ref{corrStokes}, we have
\begin{gather}
{\rm d} \bigl({\rm ev}_0^B\bigr)_!\bigl({\rm ev}_1^{B*}\xi_1 \wedge \dots \wedge {\rm ev}_k^{B*}\xi_k\bigr)  =  \bigl({\rm ev}_0^B\bigr)_! {\rm d}\bigl({\rm ev}_1^{B*}\xi_1 \wedge \dots \wedge {\rm ev}_k^{B*} \xi_k\bigr) \nonumber \\
\qquad{} + (-1)^\nu \bigl({\rm ev}_0^B\vert_{\partial {\mathcal M}_{k+1}(B;{\mathcal L};{\mathcal R})}\bigr)_!
\bigl({\rm ev}_1^{B*} \xi_1 \wedge \dots \wedge {\rm ev}_k^{B*}\xi_k\bigr),\label{Stokes}
\end{gather}
where $\nu = \dim {\mathcal M}_{k+1}(B;{\mathcal L};{\mathcal R}) + \sum_{i=1}^k \deg \xi_i.$

Combining~\eqref{rel1},~\eqref{rel2}, Claim \ref{rel3} and~\eqref{Stokes}, we have
\begin{gather*}
 {\mathfrak m}_{1,0} \circ {\mathfrak m}_{k, B} (\xi_1, \dots, \xi_k) + {\mathfrak m}_{k,B} \circ \hat{\mathfrak m}_{1,0} (\xi_1, \dots, \xi_k)  \\
\qquad{} + \sum_{(k',B'), (k'', B'') \neq (1,0)} {\mathfrak m}_{k',B'} \circ \hat{\mathfrak m}_{k'',B''} (\xi_1, \dots, \xi_k) = 0
\end{gather*}
for all $(k,B) \neq (1,0)$.
Recall that, in the case that $(k, B) = (1,0)$, ${\mathfrak m}_{1,0}= {\rm d}$ clearly satisfies ${\mathfrak m}_{1,0} \circ {\mathfrak m}_{1,0} = 0$.
Hence, we obtain Theorem \ref{conclusion}.
\end{proof}

\subsection*{Acknowledgements}
The author is partially supported by JSPS Grant-in-Aid for Scientific Research 19H00636, 24H00182.
He is also grateful for National Center for Theoretical Sciences, Taiwan, where a~part of this work
was carried out.

\pdfbookmark[1]{References}{ref}
\LastPageEnding

\end{document}